\def\R{\mathbb{R}}
\def\p{\partial}
\def\be{\begin{equation}}
\def\ee{\end{equation}}
\newtheorem{theorem}{Theorem}[section]
\newtheorem{lemma}{Lemma}[section]
\newtheorem{proposition}{Proposition}[section]
\theoremstyle{definition}
\theoremstyle{remark}
\newtheorem{remark}{Remark}[section]
\numberwithin{equation}{section}
\begin{document}
 \title[3D   Vlasov-Poisson]{Decay estimates for the $3D$ relativistic and non-relativistic Vlasov-Poisson systems}
\author{Xuecheng Wang}
\address{YMSC, Tsinghua  University, Beijing, China,  10084
}
\email{xuecheng@tsinghua.edu.cn,\quad xuecheng.wang.work@gmail.com 
}

\thanks{}

\maketitle

\begin{abstract}
We study  the small data global regularity problem of the $3D$ Vlasov-Poisson system for both the relativistic case and the non-relativistic case. The main goal of this paper is twofold. (i) Based on a Fourier method, which works systematically for both the relativistic case and the non-relativistic case,  we give a short proof for the  global regularity and the sharp decay estimate for the $3D$ Vlasov-Poisson system. Moreover, we show that the nonlinear solution scatters to a linear solution in both cases.  The result of sharp decay estimates for the non-relativistic case is not new, see   Hwang-Rendall-Vel\'azquez \cite{hwang} and Smulevici \cite{smulevici4}.   (ii) The Fourier method  presented in this paper  serves as    a good comparison   for the study of more complicated    $3D$ relativistic Vlasov-Nordstr\"om system in \cite{wang1} and  $3D$ relativistic Vlasov-Maxwell system in \cite{wang2}.

\end{abstract}

\section{Introduction}

The Vlasov-Poisson system plays a   very important role in plasma physic and galactic dynamics. In three dimensions, it    reads as follows, 
\be\label{vp}
(\textup{VP})\quad \left\{\begin{array}{l}
\displaystyle{\p_t f +  a(v)\cdot \nabla_x f + \mu \nabla_x \phi\cdot \nabla_v f=0} \\
\displaystyle{\Delta \phi = \rho(f):= \int_{\R^3} f(t,x,v) d v}, \\ 
\end{array}\right.
\ee
where $f$ denotes the distribution function  of particles, $\rho$ denotes the density function of particles, $\mu\in\{+,-\}$, $a(v)\in \{v, \hat{v} :=v/\sqrt{m^2+|v|^2/c^2}\}$, ``$m$'' denotes the mass of particles, which is normalized to be one,  and ``$c$'' denotes the speed of light.  If we assume that the  speed of light is   infinity, then    $a(v)=v$, which is called the non-relativistic case. If we assume that the speed of light $c$ is finite, which will be  normalized  to be one, the case  $a(v)=\hat{v}=v/\sqrt{1+|v|^2}$ is called the  relativistic case. Physically speaking, it   means that the speed of massive particles   doesn't exceed the speed of light. $\mu=+$ is called the plasma physics case and    $\mu=-$ is called the stellar dynamics case. The sign of $\mu$ will not play much role in the small data problem. However, it does play a fundamental role  in the large data problem, see \cite{lemou2,lemou3,lemou} and references therein for more details.

There is an extensive literature devoted to the study of the Vlasov-Poisson system in different settings. Instead of trying to elaborate it here, we   refer readers to  Andr\'easson \cite{anderson},   Mouhot\cite{mouhot1} and references therein for more comprehensive introduction.  For our interest,    we   list several representative results    for Cauchy problem. A classic result by Lions-Perthame\cite{lions} says that the solution of the $3D$ non-relativistic Vlasov-Poisson system (\ref{vp}) globally exists as long as the initial data has moments in $v$ higher than $3$, see also \cite{Pfaffelmoser}. For the case  $\mu=- $, Lemou-M\'ehats-Rapha\"el \cite{lemou} showed that spherical models, which are steady solutions of the $3D$ non-relativistic Vlasov-Poisson system (\ref{vp}), are orbital stable under small general perturbations.  In  the periodic setting, a celebrated work by Mouhot-Villani \cite{mouhot2} on the nonlinear Landau Damping  showed that  certain  steady solutions of  the $3D$ non-relativistic Vlasov-Poisson system (\ref{vp}), is stable under small perturbation in   Gevrey spaces, see also Bedrossian-Masmoudi-Mouhot \cite{jacob} for   a new and simplified proof. 

Our main interest in this paper concerns the global  regularity, sharp decay estimates, and the asymptotic behavior of the $3D$ Vlasov-Poisson system (\ref{vp}) for both the non-relativistic case and the relativistic case.  A classic result of  Bardos-Degond \cite{bardos} showed that, for small localized initial data, classic solution   of the $3D$ non-relativistic Vlasov-Poisson system (\ref{vp}) exists globally and  $L^\infty_x$-norms  of    $\nabla_x\phi$ and   $\rho(f)$ decay sharply.

An  interesting question one can ask is whether the derivatives of  density function also decay sharply, which is not answered in \cite{bardos}. Motivated from the linear solution,  one might expect that the following estimate also holds  for the nonlinear solution of the  $3D$  Vlasov-Poisson system,
\be\label{may9eqn1}
|\nabla_x^k \rho(f)(t)| \lesssim C(f(0)) (1+|t|)^{-3-k}, \quad k\in \mathbb{N}_{+}. 
\ee
 
To answer this question, 
one needs  to   
control  precisely  the high order energy of  ``$f(t,x,v)$'' over time, which  was   less studied in the literature until recently by Hwang-Rendall-Vel\'azquez \cite{hwang} and Smulevici \cite{smulevici4}, by using different methods.  The method in \cite{hwang}   is based on the analysis of the characteristics. In \cite{smulevici4}, based on the classic   Klainerman vector fields method \cite{klainerman1,klainerman2},  an   interesting modified vector field method was introduced, see also \cite{fajman} for more details. 

In this paper, we revisit this small data global regularity problem and  introduce a Fourier   method to control precisely the high order energy and prove sharp decay  estimate for the derivatives  of the density function.  

An interesting feature of  our Fourier  method is that it is   robust for both the relativistic case and the   non-relativistic case.   We will handle these two cases uniformly by freely allowing $a(v)\in\{v, \hat{v}\}$. The sharp decay  estimate for the derivatives  of the density function   in the  relativistic case is new.   Generally speaking,  the    relativistic case can differ dramatically from the  non-relativistic case. For example, given a smooth compact support initial data with negative energy in the case $\mu=-$, the solution of the $3D$ non-relativistic Vlasov-Poisson system exists globally, see the result of Schaeffer  \cite{schaeffer2}. However, the solution of   the $3D$ relativistic Vlasov-Poisson system blows up in finite time, see  the  result of Glassey-Schaffer \cite{Glassey1}.

Our main result is stated as follows.
\begin{theorem}\label{maintheorem}
For some large number $N_0\geq 30$, and a sufficiently small number $\delta\in (0, 10^{-9}]$, if the initial  distribution function $f_0(x,v)$ of particles satisfies the following estimate, 
\be\label{initialcondition}
\sum_{|\alpha|+|\beta|\leq N_0} \|(1+|v|+|x|)^{10N_0-8( |\alpha|+|\beta| )}\nabla_x^\alpha\nabla_v^\beta f_0(x,v)\|_{L^2_{x,v} }\leq \epsilon_0,
\ee
where $\epsilon_0$ is a suitably small number, then the $3D$ Vlasov-Poisson system \textup{(\ref{vp})} poses global solution for both the relativistic case and the non-relativistic case.  Moreover, the following energy estimate and sharp decay estimate hold  for both the relativistic case and the non-relativistic case, 
\be\label{energyestimate1}
\sum_{|\alpha|+|\beta|\leq N_0} \|(1+|v|+|x|)^{10N_0-8( |\alpha|+|\beta| )}\nabla_x^\alpha\nabla_v^\beta\big( f(t,x+t a(v), v)\big)\|_{L^2_{x,v} }\leq (1+|t|)^{\delta} \epsilon_0, 
\ee
\be\label{sharpdecayestimate}
 \sum_{|\alpha|\leq N_0-6} (1+|t|)^{3+|\alpha|} \big\|\nabla_x^\alpha\big(\int_{\R^3} f(t,x,v) d v\big) \big\|_{L^\infty_{x}} + (1+|t|)^{2+|\alpha|} \big\| \nabla_x^\alpha \phi(t,x) \big\|_{L^\infty_{x}} \lesssim \epsilon_0.
\ee
\end{theorem}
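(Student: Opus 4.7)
The plan is a bootstrap in the Fourier picture, built around the profile $g(t,x,v) := f(t, x+ta(v), v)$ which absorbs the free streaming. A direct computation from (\ref{vp}) yields
\begin{equation*}
\partial_t g(t,x,v) = -\mu (\nabla_x \phi)(t, x+ta(v)) \cdot \bigl[\nabla_v g - t\,(\nabla_v a(v))^\top \nabla_x g\bigr],
\end{equation*}
so the energy bound (\ref{energyestimate1}) is exactly the statement that a weighted Sobolev norm of $g$ grows no faster than $(1+t)^\delta$. Taking the Fourier transform in $x$ gives the representation
\begin{equation*}
\widehat{\rho}(t,\xi) = \int_{\R^3} e^{-ita(v)\cdot\xi}\, \widehat{g}(t,\xi,v)\, dv,
\end{equation*}
from which sharp decay of $\nabla_x^\alpha \rho$ will be extracted by stationary-phase integration by parts against the oscillatory factor $e^{-ita(v)\cdot\xi}$.

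Assume (\ref{energyestimate1}) on $[0,T]$ with constant $2\epsilon_0$; the goal is to improve it to $\epsilon_0$. To derive (\ref{sharpdecayestimate}), split the Fourier variable into $|\xi|\lesssim (1+t)^{-1}$ and $|\xi|\gtrsim (1+t)^{-1}$: at low frequencies use a crude $L^1_{x,v}$ bound on $g$; at high frequencies repeatedly integrate by parts in $v$ via $\nabla_v e^{-ita(v)\cdot\xi} = -it(\nabla_v a(v))^\top \xi\,e^{-ita(v)\cdot\xi}$, trading each factor $(t|\xi|)^{-1}$ for one $v$-derivative of $\widehat g$. Pointwise bounds on $\widehat g$ in $\xi$ come from the $(1+|x|)$-weights via Sobolev embedding, while $v$-integrability of $\nabla_v^k \widehat g$ is financed by the $(1+|v|)$-weights. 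In the relativistic case $\nabla_v a(v) = (1+|v|^2)^{-1/2}(I - \hat v\otimes \hat v)$ degenerates as $|v|\to\infty$, so every IBP costs an extra power of $(1+|v|)$; this is the reason for the unusually large $v$-weights in (\ref{initialcondition}). The decay of $\nabla_x^\alpha \phi$ then follows from $\widehat{\nabla_x^\alpha \phi} = -(i\xi)^\alpha|\xi|^{-2}\widehat \rho$ and the same frequency split; the $\alpha=0$ case uses the interpolation $\|\phi\|_{L^\infty} \lesssim \|\rho\|_{L^1}^{1/3}\|\rho\|_{L^\infty}^{2/3}$.

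To close the energy estimate, I differentiate the $g$-equation by $\nabla_x^\alpha \nabla_v^\beta$, multiply by the weight $(1+|v|+|x|)^{10N_0-8(|\alpha|+|\beta|)}$, and pair in $L^2_{x,v}$. The ``good'' term $-\mu(\nabla_x\phi)\cdot \nabla_v g$ is time-integrable: any commutator $\nabla_v^{\beta_1}[(\nabla_x\phi)(t, x+ta(v))]$ produces an explicit $t^{|\beta_1|}$ which is exactly compensated by $\nabla_x^{1+|\beta_1|}\phi \sim (1+t)^{-2-|\beta_1|}$, giving a net $(1+t)^{-2}$. The delicate piece is the term carrying the explicit $t$-factor, $\mu t(\nabla_x\phi)\cdot(\nabla_v a)^\top \nabla_x g$: a blunt $L^\infty$ bound gives only $(1+t)^{-1}$, which is barely log-divergent. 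This is the main obstacle; it is resolved by integrating by parts in $x$ inside the $L^2_{x,v}$ pairing, moving $\nabla_x$ off $g$ onto $\nabla_x\phi$ and producing $\nabla_x^2\phi \sim (1+t)^{-3}$, leaving an integrable $(1+t)^{-2}$ loss. Residual log-pieces from lower-order commutators are absorbed by the $(1+t)^\delta$ bootstrap slack. Global existence then follows from the a priori bound by standard continuation, and scattering follows from the resulting convergence of $g(t,\cdot,\cdot)$ in the weighted Sobolev topology as $t\to\infty$. The secondary difficulty, specific to the relativistic case, is keeping the weight budget $10N_0-8(|\alpha|+|\beta|)$ nonnegative and large enough at every rung of the derivative hierarchy to finance both the $x$-IBP (via $|x|$-weights) and the $v$-IBP (via $|v|$-weights) uniformly in $a(v)\in\{v,\hat v\}$.
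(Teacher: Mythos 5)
Your outline captures the correct top-level architecture (pass to the profile $g$, take Fourier transforms, prove decay by splitting $|\xi|\lessgtr(1+t)^{-1}$ and integrating by parts in $v$, close a bootstrap, and note that the degeneracy of $\nabla_v a$ for $a=\hat v$ forces large $v$-weights), and all of that matches the paper. But the argument has two real gaps, and the remedy you propose for the main obstruction is not quite right.

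First, the claim that integrating by parts in $x$ in the pairing ``resolves'' the $t\,\nabla_x\phi\cdot(\nabla_v a)^\top\nabla_x g$ term by producing a net $(1+t)^{-2}$ is not accurate. When the transferred $\nabla_x$ falls on the weight $(1+|v|+|x|)^{10N_0-8(|\alpha|+|\beta|)}$ rather than on $\nabla_x\phi$, no extra time decay appears, and one is left with $t\,\nabla_x\phi\sim(1+t)^{-1}$ times a bounded quantity, i.e.\ a logarithmically divergent contribution. That is precisely why the paper only proves the high-order energy grows like $(1+t)^\delta$ (resp.\ $(1+t)^{\delta/2}$ for the non-top orders) and does not claim an integrable bound. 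A flat ``assume $\le 2\epsilon_0$, prove $\le\epsilon_0$'' bootstrap will therefore not close: one needs the three-tier hierarchy $E_{\textup{low}}$ (bounded), $E^2_{\textup{high}}\lesssim(1+t)^{\delta/2}$, $E^1_{\textup{high}}\lesssim(1+t)^\delta$, with the quantity feeding the decay lemma being the non-growing one.

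Second, and more fundamentally, nothing in your outline addresses the loss of one $v$-derivative in the estimate for the zero-frequency quantity $\nabla_v^\alpha\widehat g(t,0,v)$ at the top order $|\alpha|=N_0$. Applying $\nabla_v^{N_0}$ to the equation produces $\nabla_v^{N_0+1}g$ and $\nabla_x\nabla_v^{N_0}g$ inside the nonlinearity, which $N_0$-derivative energy cannot control. The paper removes this by introducing the correction term
\[
g_\alpha(t,v)=\int_0^t\int_{\R^3}\nabla_x\phi(s,x+a(v)s)\,\nabla_v^\alpha g(s,x,v)\,dx\,ds
\]
and propagating $\nabla_v^\alpha\widehat g(t,0,v)-\nabla_v\cdot g_\alpha(t,v)$ instead of $\nabla_v^\alpha\widehat g(t,0,v)$ itself; this is essential and absent from your plan. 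Relatedly, for the high-order estimate when many derivatives land on $\nabla_x\phi$ (so that the density factor carries up to $N_0$ derivatives of $g$), a crude $L^\infty_x$-decay estimate on $\phi$ again loses derivatives, and the paper instead proves genuine bilinear estimates (Lemmas 3.2 and 3.3) by localizing around the space-resonance set $\{|v-\mu u|\lesssim (t 2^k)^{-1}\}$ and integrating by parts in the internal frequency $\eta$. Your proposal has no analogue of these bilinear estimates; without them the argument does not close in the range of frequencies $2^k\gtrsim t^{-1}$ when $|\alpha_1|+|\beta_1|$ is large.
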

\begin{remark}
As a byproduct of the proof of the above theorem, we know that the nonlinear solution scatters to  a linear solution as time goes to infinity. More precisely,   there exists scattering data $\tilde{f}_{\pm \infty}(x,v)$ such that,    $\forall v\in \R^3$,  $\displaystyle{   \int_{\R^3}   f(t,x,v) d x  } \longrightarrow \int_{\R^3}   \tilde{f}_{\pm \infty}(x- ta(v),v) d x   = \int_{\R^3}   \tilde{f}_{\pm \infty}(x,v) d x   $ as $t\longrightarrow \pm \infty$.  
\end{remark}
 
 \begin{remark}
 We remark that the choice of weight function can be improved substantially  for the non-relativistic case. For simplicity, we  don't  pursue this goal here. We choose a high order weight function   in (\ref{initialcondition}) because of the unavoidable loss of weight in ``$v$'' in the decay estimate of the density type function for the relativistic case, e.g., see the decay estimate (\ref{decayrvpphi}) in Lemma \ref{decayestimate}.

 \end{remark}

\section{Preliminary and  Proof of the  main theorem}
\subsection{Preliminary}
 For any two numbers $A$ and $B$, we use  $A\lesssim B$ and $B\gtrsim A$  to denote  $A\leq C B$, where $C$ is an absolute constant. We  fix an even smooth function $\tilde{\psi}:\R \rightarrow [0,1]$, which is supported in $[-3/2,3/2]$ and equals to ``$1$'' in $[-5/4, 5/4]$. For any $k\in \mathbb{Z}$, we define
\[
\psi_{k}(x) := \tilde{\psi}(x/2^k) -\tilde{\psi}(x/2^{k-1}), \quad \psi_{\leq k}(x):= \tilde{\psi}(x/2^k)=\sum_{l\leq k}\psi_{l}(x), \quad \psi_{\geq k}(x):= 1-\psi_{\leq k-1}(x).
\]
Moreover, we use  $P_{k}$, $P_{\leq k}$ and $P_{\geq k}$ to denote the projection operators  by the Fourier multipliers $\psi_{k}(\cdot),$ $\psi_{\leq k}(\cdot)$ and $\psi_{\geq k }(\cdot)$ respectively.

For any $k\in \mathbb{Z}$, we define  the $\mathcal{S}^\infty_k$-norm associated with symbols and the class of symbols $\mathcal{S}^\infty$ as follows, 
\[
\|m(\xi)\|_{\mathcal{S}^\infty_k}:=  \sum_{|\alpha|=0, 1,\cdots, 10}2^{|\alpha| k}\|\mathcal{F}^{-1}[\nabla_\xi^\alpha m(\xi)\psi_k(\xi)]\|_{L^1},\]
\[
\mathcal{S}^\infty:=\{m(\xi):  \| m(\xi)\|_{\mathcal{S}^\infty}:=\sup_{k\in \mathbb{Z}} \| m(\xi)\|_{\mathcal{S}^\infty_k} < \infty \}. 
\]

\subsection{Proof of the  main theorem: a bootstrap argument}

In this subsection, we introduce the set-up of energy estimates and reduce the proof of our main theorem \ref{maintheorem} into the proof of two key propositions, which will be postpone to the next section for the sake of clarity.

Firstly, instead of studying the distribution function $f(t,x,v)$ itself, we  study the profile  of the Vlasov-Poisson system (\ref{vp}) ``$g(t,x,v)$'' defined  as follows, 
\[
g(t,x,v):= f(t,x+a(v)t, v), \quad \Longrightarrow \nabla_v f(t,x,v) =  \nabla_v g(t,x- a(v)t,v) - t\nabla_v a(v)\cdot \nabla_x  g(t,x-  a(v)t,v). 
\]

 As a result, the linear effect comes from the transport operator is balanced out, we can derive the following equation satisfied by the profile of the VP system (\ref{vp}) 
\be\label{profilevp}
\p_t g(t,x,v)=  \nabla_x \phi(t,x+a(v)t)\cdot  (\nabla_v - t\nabla_v a(v)\cdot \nabla_x ) g(t,x ,v). 
\ee
Hence, instead of studying the transport operator ``$\p_t + a(v)\cdot \nabla_v$'' acts on $f$, we study ``$\p_t$'' acts on $g$.    The idea of studying the profile is not new, it has been widely used in the study of nonlinear dispersive equation recently.

Due to the small data regime,  it is trivial to prove that the solution exists up to time one and the solution has the same size as the initial data, which is of size $\epsilon_0$, see (\ref{initialcondition}). For simplicity, we assume that we start from time one. We will prove that the maximal time of existence is infinity, i.e., global existence, by using a bootstrap argument.

Firstly, we  define the high order energy   for the Vlasov-Poisson system as follows, 
\be
E_{\textup{high}}^{ }  (t) := E_{\textup{high}}^{1 } (t)+ E_{\textup{high}}^{ 2} (t),\quad E_{\textup{high}}^{ 1} (t):= \sum_{|\alpha|+|\beta|= N_0} \|\omega_{\beta}^\alpha(x,v) \nabla_x^\alpha \nabla_v^\beta g(t,x,v) \|_{L^2_{x,v}},
\ee
\be\label{highorderenergy}
E_{\textup{high}}^{2 } (t):= \sum_{|\alpha|+|\beta|<  N_0} \|\omega_{\beta}^\alpha(x,v) \nabla_x^\alpha \nabla_v^\beta g(t,x,v) \|_{L^2_{x,v}},\quad \omega_{\beta}^\alpha(x,v):=(1+|v|+|x|)^{ 10N_0-8(|\alpha|+|\beta|)} ,
\ee
where we separate out  $E_{\textup{high}}^{1 } (t)$ to  denote  the strictly top order energy. 

Moreover, we define the following low order energy for the Vlasov-Poisson  system
\be\label{loworderenergydef}
 E_{\textup{low}}^{ } (t):= \sum_{|\alpha| \leq N_0 } \|\widetilde{\omega_\alpha}(v)   \big(\nabla_v^\alpha\widehat{g}(t,0,v) -\nabla_v\cdot g_\alpha(t,v)\big)  \|_{  L^2_{ v}},\quad \widetilde{\omega_\alpha}(v) :=(1+|v|)^{10N_0- 8 |\alpha| -20} ,
 \ee 
 where the correction term $g_\alpha(t,v)$ is defined as follows, 
 \be\label{may5eqn1}
g_\alpha(t,v):= \left\{ 
\begin{array}{ll}
 \displaystyle{ \int_{0}^t \int_{\R^3} \nabla_x\phi(s,x+a(v)s) \nabla_v^\alpha g(s,x,v) d x d s   }  & \textup{when}\, |\alpha|=N_0 \\
 0 & \textup{when}\, |\alpha|< N_0.\\
  \end{array}\right.
 \ee
In later argument (the proof of Lemma \ref{loworderenergy}), we will see that the  correction term $g_\alpha(t,v)$  is necessary due to a potential loss of derivative in ``$v$''.

We make the following bootstrap assumption,
\be\label{bootstrap}
\sup_{t\in[0, T)} |t|^{-\delta}  E_{\textup{high}}^{ 1} (t) + |t|^{-\delta/2}  E_{\textup{high}}^{ 2} (t) +  E_{\textup{low}}^{ } (t)\lesssim \epsilon_1:=\epsilon_0^{5/6},
\ee
where, from the local theory,  $T>1$. 

We emphasize that it is important to show that the low order energy $E_{\textup{low}}^{ } (t)$, which is responsible for the sharp decay estimate of derivatives of density function (see  (\ref{decayrvpphi}) in Lemma \ref{decayestimate}),  doesn't grow over time. This fact also motivates us to make the bootstrap assumption as in (\ref{bootstrap}).

The rest of this paper is devoted to prove the following two propositions.  By assuming the validity of these two propositions, we finish the proof of our main theorem \ref{maintheorem}.

\begin{proposition}\label{highorderenergyestimate}
For any $t_1,t_2\in   [1, T)$, s.t., $t_2\geq t_1$, the following estimates hold for  the increment of high order energy with respect to time  
\[
\big(E_{\textup{high}}^{ 1 } (t_2) \big)^2- \big(E_{\textup{high}}^{1 } (t_1) \big)^2\lesssim \sum_{|\alpha|=N_0} \int_{t_1}^{t_2}   |t|^{-1} E_{\textup{high}}^{  } (t)   \big[  E_{\textup{low}}^{ } (t)  E_{\textup{high}}^{  } (t )  + |t|^{-1/2}   E_{\textup{high}}^{ } (t) 
\]
\be\label{highordervp}
+\log(1+|t|) E_{\textup{high}}^{ 2} (t) \| \widetilde{\omega_\alpha}(v) g_{\alpha}(t,v)\|_{L^2_v}  \big] d t,
\ee
\be\label{highordervp2}
\big(E_{\textup{high}}^{ 2 } (t_2) \big)^2- \big(E_{\textup{high}}^{2 } (t_1) \big)^2\lesssim  \int_{t_1}^{t_2}   |t|^{-1} \big(E_{\textup{high}}^{  2} (t) \big)^2  \big[  E_{\textup{low}}^{ } (t)    + |t|^{-1/2}   E_{\textup{high}}^{ 2} (t)    \big] d t.
\ee
Moreover, the following estimate holds for the weighted $L^2_v$-norm  of the correction term $g_{\alpha}(t,v)$ defined in  \textup{(\ref{may5eqn1})},
\be\label{may7eqn31}
\sum_{|\alpha|=N_0} \|\widetilde{\omega_\alpha}(v) g_{\alpha}(t,v)\|_{L^2_v} \lesssim \int_0^t (1+|s|)^{-2} \big(E_{\textup{high}} (s) \big)^2 d s . 
\ee
 \end{proposition}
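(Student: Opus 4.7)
The plan is a weighted $L^2_{x,v}$ energy estimate on the differentiated profile equation (\ref{profilevp}), exploiting a divergence-free transport structure in $(x,v)$ and routing the sole top-order derivative loss into the correction $g_\alpha$. All pointwise decay of $\nabla_x\phi$ and its derivatives is imported from Lemma~\ref{decayestimate}. First I would apply $\nabla_x^\alpha\nabla_v^\beta$ to (\ref{profilevp}) and expand by Leibniz,
\[
\partial_t(\nabla_x^\alpha\nabla_v^\beta g)=\nabla_x\phi(t,x+a(v)t)\cdot(\nabla_v-t\nabla_v a\cdot\nabla_x)(\nabla_x^\alpha\nabla_v^\beta g)+\sum_{\substack{\alpha_1+\alpha_2=\alpha,\ \beta_1+\beta_2=\beta\\(\alpha_1,\beta_1)\neq(0,0)}}\mathcal{C}_{\alpha_1,\beta_1}[\phi]\cdot\mathcal{D}_{\alpha_2,\beta_2}[g],
\]
where $\mathcal{C}[\phi]$ is a derivative of $\nabla_x\phi(t,x+a(v)t)$ carrying explicit powers of $t$ (from the chain rule through $x+a(v)t$ and from the $t\nabla_v a$ factor) while $\mathcal{D}[g]$ is one extra derivative of $g$. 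Pairing against $(\omega_\beta^\alpha)^2\nabla_x^\alpha\nabla_v^\beta g$ and integrating in $(x,v)$, the transport term vanishes to leading order because the vector field $V=(-t(\nabla_v a)^T\nabla_x\phi,\nabla_x\phi)(t,x+a(v)t)$ on $\R^3_x\times\R^3_v$ is divergence-free (the contributions $-t\,\partial_{v_i}a_j\,\partial_{x_ix_j}\phi$ and $t\,\partial_{v_j}a_k\,\partial_{x_kx_j}\phi$ cancel after relabeling), so only the derivative of the weight survives, bounded by $(1+t)\|\nabla_x\phi\|_{L^\infty}(1+|x|+|v|)^{-1}\omega_\beta^\alpha h^2$, which by Lemma~\ref{decayestimate} supplies the $|t|^{-1}\cdot|t|^{-1/2}E_{\textup{high}}^{2}$ piece in both (\ref{highordervp}) and (\ref{highordervp2}).

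Next, the commutator sum is estimated by placing $\mathcal{C}[\phi]$ in $L^\infty_x$ and $\mathcal{D}[g]$ in weighted $L^2_{x,v}$. Whenever at least one $\nabla_v$ lands on $\phi(t,x+a(v)t)$, the chain-rule cost of $(1+t)$ is paid for by the enhanced decay of higher $\nabla_x$-derivatives of $\phi$ from Lemma~\ref{decayestimate} -- where $E_{\textup{low}}$ enters via the same lemma -- producing the $|t|^{-1}E_{\textup{low}}E_{\textup{high}}$ terms. The only case where this fails is the strictly top-order commutator at $|\alpha|=N_0$, $|\beta|=0$, in which $\nabla_x\phi\cdot\nabla_v^{\alpha+e_j}g$ carries $N_0+1$ $v$-derivatives on $g$. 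Here I would exploit that the $s$-derivative of $g_\alpha$ is exactly $\int\nabla_x\phi(s,x+a(v)s)\nabla_v^\alpha g\,dx$ to rewrite the offending contribution in terms of $\nabla_v\cdot g_\alpha$; the decomposition $\nabla_v^\alpha\widehat{g}(t,0,v)=(\nabla_v^\alpha\widehat{g}-\nabla_v\cdot g_\alpha)+\nabla_v\cdot g_\alpha$ then controls the first bracket by $E_{\textup{low}}$ and leaves an error of the form $\log(1+t)\,\|\widetilde{\omega_\alpha}g_\alpha\|_{L^2_v}\cdot E_{\textup{high}}^2$, producing the last term of (\ref{highordervp}); the $\log(1+t)$ arises from time-integrating an $s^{-1}$ factor that appears through the Newtonian potential $\Delta^{-1}$ combined with the dispersive gain.

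Finally, for (\ref{may7eqn31}) I would insert the factor $(1+|x|)^{-2}(1+|x|)^{2}$ inside the $x$-integral defining $\widetilde{\omega_\alpha}(v)g_\alpha(t,v)$, apply Cauchy--Schwarz in $x$ to bound $\|\nabla_x\phi(s)\|_{L^\infty_x}\lesssim(1+s)^{-2}E_{\textup{high}}(s)$ via Lemma~\ref{decayestimate}, and exploit the weight inequality $(1+|x|)^{2}\widetilde{\omega_\alpha}(v)\leq\omega_0^{\alpha}(x,v)$ (which holds for $|\alpha|=N_0$ since $2N_0-20+2\leq 2N_0$) to absorb the second factor into $E_{\textup{high}}(s)$. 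The hard part will be the algebraic identification in the second paragraph: matching the unabsorbable top-order commutator with the $v$-divergence of $g_\alpha$, and verifying that the weight hierarchy $10N_0-8(|\alpha|+|\beta|)$ pays for every $(1+t)$ chain-rule factor through the loss of $(1+|x|+|v|)^{-1}$ so the weighted energies close. The divergence-free cancellation and the Cauchy--Schwarz argument for $g_\alpha$ itself are routine once this bookkeeping is in place.
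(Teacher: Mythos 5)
Your high-level strategy — differentiate the profile equation, exploit the divergence-free transport structure for the no-commutator term, and estimate the commutators through decay of $\nabla_x\phi$ — is the same as the paper's in outline. But there are two genuine gaps.

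\emph{Missing the Fourier-side integration by parts in $u$.} When $\nabla_v^{\beta_1}$ with large $|\beta_1|$ hits $\phi(t,x+a(v)t)$, the chain rule produces $t^{|\gamma|}\nabla_x^\gamma\phi$ with $|\gamma|\leq|\beta_1|$. You claim the enhanced decay of $\nabla_x^\gamma\phi$ from Lemma~\ref{decayestimate} absorbs the $t$-powers. But that lemma trades decay rate for $v$-derivatives of $\widehat{g}(t,0,v)$: the symbol of $\nabla_x^{\gamma+1}\phi$ has order $|\gamma|-1$, so the estimate requires $\nabla_v^\alpha\widehat{g}(t,0,v)$ with $|\alpha|$ up to roughly $5+|\gamma|$, which exceeds $N_0$ once $|\gamma|$ is close to $N_0$, and is controlled by neither $E_{\textup{low}}$ nor $E_{\textup{high}}$. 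The paper's device, equation (\ref{april29eqn2}), avoids this: after inserting the Poisson kernel, each factor $t\xi$ is traded for a $\nabla_u$-derivative on $g$ inside the density integral (using $\nabla_u(a(u)\cdot\xi)$), giving $T_\gamma(\nabla_u^\gamma g)$ with a degree-zero symbol $m_\gamma(\xi)$. The multiplier order stays at $-2$, only $|\gamma|\leq N_0$ velocity derivatives are spent, and the loss becomes controllable. This is indispensable in the regime $|\alpha_2|+|\beta_2|<N_0-15$ (Step~3 of the paper's proof); without it the commutator estimate over-loses $v$-derivatives and does not close. Your route works in the complementary regime where at most $O(1)$ derivatives hit $\phi$, which is what Step~2 does.

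\emph{Misidentification of where $g_\alpha$ and the logarithm arise.} You place the derivative loss at $|\alpha|=N_0$, $|\beta|=0$, with ``$\nabla_x\phi\cdot\nabla_v^{\alpha+e_j}g$'' carrying $N_0+1$ $v$-derivatives on $g$. With $|\beta|=0$ the Leibniz rule sends only $x$-derivatives onto $g$ plus at most one $\nabla_v$ from the transport operator, so no such term appears; that case is fully handled by the divergence-free cancellation. In the paper, $g_\alpha$ actually enters because, after the Fourier reduction above, the low-frequency part of $T_\gamma(\nabla_u^\gamma g)$ is governed by $\nabla_u^\gamma\widehat{g}(t,0,u)$ with $|\gamma|=N_0$, which $E_{\textup{low}}$ controls only modulo $\nabla_u\cdot g_\gamma$; adding and subtracting the correction yields the $\|\widetilde{\omega_\alpha}(v)g_\alpha\|_{L^2_v}$ term. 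The $\log(1+|t|)$ factor comes from summing the dyadic frequency blocks with $|t|^{-1}\lesssim 2^k\lesssim 1$ (cf.\ (\ref{april30eqn51})), not from time-integrating an $s^{-1}$ factor; and the intermediate frequency range requires the bilinear estimates of Lemmas~\ref{bilineardensitylemma} and \ref{bilinearestimatelemma2}, since a crude $L^\infty\times L^2$ pairing is not strong enough to beat the explicit $t$ in the nonlinearity. Your Cauchy--Schwarz argument for (\ref{may7eqn31}) is fine and matches the paper's direct use of the decay estimate.
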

\begin{proof}
Postponed to subsection \ref{highprop}. 
\end{proof}

\begin{proposition}\label{loworderenergy}
 For any $t_1,t_2\in   [1, T)$, s.t., $t_2\geq t_1$, the following estimate holds,
\be\label{may4eqn45}
\big| E_{\textup{low}}^{ } (t_2) -  E_{\textup{low}}^{  } (t_1) \big| \lesssim \int_{t_1}^{t_2} (1+|t|)^{-2+\delta} \big( E_{\textup{high}}^{ } (t) \big)^2 d t.
\ee
\end{proposition}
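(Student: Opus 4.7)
The plan is to establish a pointwise-in-time bound $|\partial_t E_{\textup{low}}(t)|\lesssim (1+|t|)^{-2+\delta}(E_{\textup{high}}(t))^2$ and then integrate over $[t_1,t_2]$. Because $\widehat g(t,0,v)=\int_{\R^3}f(t,x,v)\,dx$ (via the change of variable $x\mapsto x+a(v)t$), the $a(v)\cdot\nabla_x f$ term in (\ref{vp}) integrates to zero, and the Vlasov equation gives
\[
\partial_t\widehat g(t,0,v)=-\mu\int_{\R^3}\nabla_x\phi(t,x)\cdot\nabla_v f(t,x,v)\,dx.
\]
Rewriting $\nabla_v f(t,x,v)=\nabla_v g(t,x-a(v)t,v)-t\,\nabla_v a(v)\cdot\nabla_x g(t,x-a(v)t,v)$ and changing variable $y=x-a(v)t$ splits the right-hand side into two pieces; in the piece containing $\nabla_x g$ I integrate by parts in $y$ to move the derivative onto the shifted phase $\nabla_x\phi(t,y+a(v)t)$, producing a term in which $g$ itself (no derivatives in $y$ or $v$) is paired with $t\,\nabla_v a\cdot\nabla_x^2\phi$.

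Applying $\nabla_v^\alpha$ and expanding by Leibniz, each $\partial_{v_k}$ that lands on the shifted phase contributes a factor $t\,\partial_{v_k}a(v)\cdot\nabla_x$ (adding one $x$-derivative to $\phi$ and one power of $t$), while each $\partial_{v_k}$ that lands on the $g$-factor raises the $v$-derivative count on $g$. The unique Leibniz term in which all derivatives fall on $g$, namely
\[
-\mu\int_{\R^3}\nabla_x\phi(t,y+a(v)t)\cdot\nabla_v^{\alpha+1}g(t,y,v)\,dy,
\]
has $|\alpha|+1$ derivatives on $g$ and lies outside $E_{\textup{high}}$ when $|\alpha|=N_0$. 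The correction $g_\alpha$ in (\ref{may5eqn1}) is chosen precisely so that $\partial_t(\nabla_v\cdot g_\alpha)$ contains the same integral (from the product-rule contribution where $\partial_{v_k}$ falls on $\nabla_v^\alpha g$ rather than on the shifted phase), and with the appropriate sign it cancels in $\partial_t[\nabla_v^\alpha\widehat g(t,0,v)-\nabla_v\cdot g_\alpha(t,v)]$. For $|\alpha|<N_0$ the correction vanishes but $|\alpha|+1\le N_0$, so the offending integral is directly controlled by $E_{\textup{high}}$ on the same footing as the other Leibniz terms treated below.

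After this cancellation, every remaining term has the schematic form
\[
t^{k_1}\int_{\R^3}(\nabla_x^{k_2}\phi)(t,y+a(v)t)\,P(v)\,\nabla_v^{\beta_1}\nabla_y^{\beta_2}g(t,y,v)\,dy,
\]
with $k_2\le k_1+1$, $|\beta_1|+|\beta_2|\le |\alpha|\le N_0$, and $P(v)$ a bounded function built from $v$-derivatives of $a(v)$. Using the sharp decay (Lemma \ref{decayestimate}, cf.\ (\ref{decayrvpphi})), $t^{k_1}\|\nabla_x^{k_2}\phi(t,\cdot)\|_{L^\infty_x}\lesssim (1+|t|)^{-2}\bigl(\epsilon_0+E_{\textup{low}}(t)\bigr)$ uniformly in $(k_1,k_2)$. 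Placing $L^\infty_x$ on the $\phi$-factor, moving $\|\cdot\|_{L^2_v}$ inside $\int dy$ by Minkowski, and applying Cauchy--Schwarz in $y$ against the integrable weight $(1+|y|)^{-3/2-\epsilon}$ converts the resulting $L^1_y L^2_v$-norm of $\widetilde{\omega_\alpha}(v)\nabla_v^{\beta_1}\nabla_y^{\beta_2}g$ into a weighted $L^2_{y,v}$-norm controlled by $E_{\textup{high}}(t)$; the 20-unit margin between the exponents $10N_0-8|\alpha|-20$ (in $\widetilde{\omega_\alpha}(v)$) and $10N_0-8(|\beta_1|+|\beta_2|)$ (in $\omega_\beta^\gamma(x,v)$) easily absorbs the extra $(1+|y|)^{3/2+\epsilon}$ factor. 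Combining these estimates and using $E_{\textup{low}}\le E_{\textup{high}}$ gives the desired pointwise bound, from which (\ref{may4eqn45}) follows by $t$-integration (the factor $(1+|t|)^{-2+\delta}$ being integrable for small $\delta$).

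The main difficulty is the combinatorial bookkeeping in the relativistic case $a(v)=\hat v$, where arbitrarily many $v$-derivatives of $a$ appear in each Leibniz term: the expansion produces configurations in which $k$ of the $|\alpha|$ $\partial_v$'s stack onto a single shift factor, yielding $t^{|\alpha|-k+1}$ times a $k$-th derivative of $a$ and $\nabla_x^{|\alpha|-k+2}\phi$ (instead of $t^{|\alpha|+1}\nabla_x^{|\alpha|+2}\phi$). One must verify that the reduced power of $t$ is always compensated by the extra $x$-derivatives on $\phi$ via $\|\nabla_x^{k_2}\phi\|_{L^\infty_x}\lesssim (1+|t|)^{-2-k_2}$, so that the uniform bound $t^{k_1}\|\nabla_x^{k_2}\phi\|_{L^\infty_x}\lesssim (1+|t|)^{-2}$ holds in every configuration, and that the polynomial-in-$v$ factors arising from derivatives of $a(v)$---which are bounded, and in fact decay in $|v|$ in the relativistic case---are absorbable by $\widetilde{\omega_\alpha}(v)$.
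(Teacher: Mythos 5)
There is a genuine gap at the step where you claim the uniform bound
\[
t^{k_1}\,\|\nabla_x^{k_2}\phi(t,\cdot)\|_{L^\infty_x}\lesssim (1+|t|)^{-2}\bigl(\epsilon_0+E_{\textup{low}}(t)\bigr)\quad\text{``uniformly in }(k_1,k_2)\text{''}.
\]
In your Leibniz expansion of $\nabla_v^\alpha$ acting on the shifted potential, the worst configuration (all derivatives landing on $\phi(t,\cdot+a(v)t)$) produces $t^{|\alpha|}\nabla_x^{|\alpha|+1}\phi$ with $|\alpha|$ as large as $N_0$, i.e.\ $k_2=N_0+1$. The $L^\infty_x$-decay estimate for $\nabla_x^{k_2}\phi$ obtained from Lemma \ref{decayestimate} with symbol $|\xi|^{k_2-2}$ (i.e.\ $a=k_2-2$) requires control of $\|\nabla_v^{\alpha'}\widehat g(t,0,v)\|_{L^1_v}$ for $|\alpha'|\le k_2+3$. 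Already for $k_2\gtrsim N_0-3$ these are derivatives that $E_{\textup{low}}$ does not control, and for $k_2>N_0-3$ they exceed the entire energy class, so the claimed estimate is not available. (Note also that (\ref{sharpdecayestimate}) itself is only stated for $|\alpha|\le N_0-6$, and is a consequence, not a premise, of the bootstrap.) The correction term $g_\alpha$ only handles the opposite extreme in the Leibniz expansion (all derivatives on $g$); it does nothing for the ``all derivatives on $\phi$'' extreme, which is the one that breaks your uniform decay claim.

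The paper closes this gap with two ingredients that your argument does not use. First, the identity (\ref{april29eqn2}): instead of treating $\nabla_v^\beta(\phi(t,x+a(v)t))$ by a raw Leibniz rule that produces powers $t^{|\gamma|}(i\xi)^\gamma|\xi|^{-2}$, one integrates by parts $|\gamma|$ times in the density variable $u$, converting every such factor into a $0$-order symbol $m_\gamma(\xi)|\xi|^{-2}$ acting on $\nabla_u^\gamma\widehat g(t,\xi,u)$. This kills the explicit $t$-powers entirely and, crucially, never raises the order of the $\xi$-multiplier beyond $|\xi|^{-1}$, so no high $x$-derivative of $\phi$ ever appears. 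Second, because the density term now carries $\nabla_u^\gamma g$ with $|\gamma|$ possibly as large as $N_0$, one cannot then apply the $L^\infty_x$-decay estimate (which would again cost five extra $v$-derivatives); this is precisely what the bilinear estimates of Lemmas \ref{bilineardensitylemma} and \ref{bilinearestimatelemma2} are for — they bound $B_k(f_1,f_2)$ in $L^2_{x,v}$ without losing any derivative on the density input $f_2$. Your $L^\infty_x$--Minkowski--Cauchy--Schwarz scheme is essentially what the paper uses in the complementary regime $|\gamma|\ge N_0-15$, but it cannot be extended to the high-$\gamma$ (equivalently, high-$k_2$) regime. Without the integration by parts in $u$ and the bilinear lemmas, the proof does not close.
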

\begin{proof}
Postponed to subsection \ref{lowprop}. 
\end{proof}

\noindent \textit{Proof of Theorem \ref{maintheorem}}:\qquad  The proof  of Theorem \ref{maintheorem} is a standard bootstrap argument. 
From    the estimates (\ref{highordervp}--\ref{may7eqn31})  in Proposition \ref{highorderenergyestimate} and the estimate (\ref{may4eqn45}) in Proposition \ref{loworderenergy}, we have
\[
\sup_{t\in[1, T)} |t|^{-\delta}  E_{\textup{high}}^{ 1} (t)+ |t|^{-\delta/2}  E_{\textup{high}}^{ 2} (t)+  E_{\textup{low}}^{ } (t)\lesssim \epsilon_0 +\sup_{t\in[1,T)} \big[|t|^{-\delta} \big(\int_{1}^{t} \big(|s|^{-1+2\delta}\epsilon_1^3
\]
\be\label{may10eqn11}
 +|s|^{-1+3/2\delta}\log(1+|s|) \epsilon_1^3 d s  \big)^{1/2}+|t|^{-\delta/2} \big(\int_{1}^{t}  |s|^{-1+\delta}  \epsilon_1^3 d s \big)^{1/2} +\int_{1}^{t} |s|^{-2+4\delta} \epsilon_1^2 d s\big]\lesssim \epsilon_0.
\ee
Hence ``$T$'' can be extended to infinity and  finishing the bootstrap argument.

The desired energy estimate (\ref{energyestimate1})  holds from the estimate  (\ref{may10eqn11}) and the  desired estimate (\ref{sharpdecayestimate}) holds straightforwardly from the linear decay estimate (\ref{decayrvpphi}) in Lemma \ref{decayestimate}  and the estimate  (\ref{may10eqn11}).  

\qed

\section{Proof of key propositions}

\subsection{Toolkit: three basic lemmas}

In this subsection, we prove three basic lemmas that will be used in the proof of key propositions \ref{highprop} and \ref{lowprop}. 

The first basic lemma concerns the $L^\infty_x$-decay estimate of density type function, which is  controlled by the energy of distribution function. Despite  our final decay estimate (\ref{decayrvpphi}) in Lemma  \ref{decayestimate} looks complicate, it  is actually very delicate and precise because we separate out the leading role of the  low order energy and the role of derivatives. From the estimate (\ref{decayrvpphi}), we know that extra derivative provides extra $1/t$ decay rate in time, which is consistent with the expectation in (\ref{may9eqn1}). 
 
\begin{lemma}\label{decayestimate}
For any fixed $a(v)\in\{v,\hat{v}\}$,  $x\in \R^3$, $a, t\in \R$, s.t, $|t|\geq 1$, $a> -3$, and any given symbol $m(\xi,v)\in L^\infty_v \mathcal{S}^\infty$, the following decay estimate holds, 
\[
 \big|\int_{\R^3}\int_{\R^3} e^{i x\cdot \xi + i t a(v)  \cdot \xi} m(\xi, v)|\xi|^{a}\widehat{g}(t, \xi, v) d v d\xi   \big|\lesssim \sum_{|\alpha|\leq 5+\lfloor a\rfloor} \big(\sum_{|\beta|\leq 5+\lfloor a\rfloor}   \|\nabla_v^\beta m(\xi,v)\|_{L^\infty_v\mathcal{S}^\infty}\big)
 \]
\be\label{decayrvpphi}
\times  \big[|t|^{-3-a}    \|(1+|v|)^{5+|a|} \nabla_v^\alpha \widehat{g}(t,0,v) \|_{L^1_v } +|t|^{-4-a} \| (1+|v|)^{5+|a|} (1+|x|)\nabla_v^\alpha g(t,x,v)\|_{L^1_x L^1_v}\big].
\ee
\end{lemma}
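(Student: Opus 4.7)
The strategy is a Littlewood--Paley decomposition in the frequency variable $\xi$, combined with a Taylor expansion of $\widehat{g}(t,\xi,v)$ at $\xi=0$ and integration by parts in $v$ at high frequencies to trade the oscillation $e^{ita(v)\cdot\xi}$ for time decay. First I would write the integral as $\sum_{k\in\mathbb{Z}} I_k$, with $\xi$ localized to $|\xi|\sim 2^k$ by $\psi_k$. The assumption $m\in L^\infty_v\mathcal{S}^\infty$ then yields, uniformly in $v$, the standard kernel bounds $\|K_{a,k}(\cdot,v)\|_{L^\infty_z}\lesssim 2^{(a+3)k}$ and $\|K_{a,k}(\cdot,v)\|_{L^1_z}\lesssim 2^{ak}$, where
\[
K_{a,k}(z,v) := \int_{\R^3} e^{iz\cdot\xi}\,m(\xi,v)\,|\xi|^a\,\psi_k(\xi)\,d\xi,
\]
together with similar bounds after $v$-differentiation of $m$.

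Next I would decompose $\widehat{g}(t,\xi,v) = \widehat{g}(t,0,v) + r(t,\xi,v)$, with $|r(t,\xi,v)|\lesssim |\xi|\,\|x\,g(t,\cdot,v)\|_{L^1_x}$, splitting $I_k = A_k + B_k$. The extra factor $|\xi|\sim 2^k$ inside $B_k$ is exactly what accounts for the $|t|^{-1}$ gap between the two terms on the right-hand side of (\ref{decayrvpphi}). I would split the $k$-sum at the dyadic threshold $2^{k_0}\sim |t|^{-1}$. At low frequencies $2^k\leq |t|^{-1}$, the crude estimate $|A_k|\leq \|K_{a,k}(\cdot,v)\|_{L^\infty_z}\|\widehat{g}(t,0,v)\|_{L^1_v}\lesssim 2^{(a+3)k}\|\widehat{g}(t,0,v)\|_{L^1_v}$, summed in $k$, is a geometric series dominated by its top term and yields the contribution $|t|^{-3-a}\|\widehat{g}(t,0,v)\|_{L^1_v}$; this is the $\alpha=0$ part of the first term in (\ref{decayrvpphi}).

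At high frequencies $2^k\geq|t|^{-1}$, I would integrate by parts in $v$ using the identity $\nabla_v e^{ita(v)\cdot\xi} = it\,(\nabla_v a(v))^\top\xi\cdot e^{ita(v)\cdot\xi}$, iterated $5+\lfloor a\rfloor$ times. Each step gains $(|t|2^k)^{-1}$, up to polynomial weights in $|v|$ coming from $(\nabla_v a(v))^{-1}$ (trivial in the non-relativistic case, polynomially bounded for $a(v)=\hat v$), and deposits one $\nabla_v$ onto $m$ or onto $\widehat{g}(t,0,v)$. This produces $|A_k|\lesssim 2^{(a+3)k}(|t|2^k)^{-(5+\lfloor a\rfloor)}$ times the weighted $L^1_v$-norm of $\nabla_v^\alpha\widehat{g}(t,0,v)$ appearing in (\ref{decayrvpphi}); summation over $2^k\geq|t|^{-1}$ is now a convergent geometric series dominated by its bottom, and again totals $|t|^{-3-a}$. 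Running the identical argument on $B_k$, the extra $2^k$ factor promotes this bound to $|t|^{-4-a}$, and the $\|x\,g(t,\cdot,v)\|_{L^1_x}$ bound on $r$ combines with the $v$-weights to give the weighted $L^1_xL^1_v$-norm of $\nabla_v^\alpha g$ in the second term of (\ref{decayrvpphi}).

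The main obstacle is the bookkeeping in the relativistic case $a(v)=\hat v$, where $\nabla_v\hat v$ is non-constant and its inverse produces polynomial $(1+|v|)$-type weights at every integration by parts; one has to verify that the total weight accumulated over $5+\lfloor a\rfloor$ steps, together with the extra safety weight needed to pass from $L^\infty_v$ to $L^1_v$ via $(1+|v|)^{3+}$, fits within the $(1+|v|)^{5+|a|}$ prefactor appearing in the statement. Once these weights are tracked, the matching of the low- and high-frequency contributions at the threshold $2^k\sim |t|^{-1}$ reduces to the standard geometric-series estimate.
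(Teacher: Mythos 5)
Your proposal is correct and follows essentially the same route as the paper: dyadic decomposition in $\xi$, splitting off the zero frequency $\widehat{g}(t,0,v)$ (the paper uses the fundamental theorem of calculus form $\int_0^1 \xi\cdot\nabla_\xi\widehat{g}(t,s\xi,v)\,ds$ for the remainder, which is your $r$), volume-of-support estimates for $2^k\leq|t|^{-1}$, repeated integration by parts in $v$ for $2^k\geq|t|^{-1}$ gaining $(|t|2^k)^{-1}$ per step modulo $(1+|v|)$-weights from $\nabla_v a(v)$, and summing the two geometric series that match at $2^k\sim|t|^{-1}$. The only cosmetic difference is that the paper applies the same number of integrations by parts to both the zero-frequency piece and the remainder and tracks the fractional power $2^{(a-\lfloor a\rfloor)k}$ explicitly, but this is bookkeeping, not a different idea.
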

\begin{proof}
After doing dyadic decomposition for the frequency ``$\xi$'' and separating out the zero frequency, we have the following decomposition, 
\[
\int_{\R^3}\int_{\R^3}  e^{i x\cdot \xi + i t a(v)  \cdot \xi} m(\xi,v)|\xi|^{a}\widehat{g}(t, \xi, v) d v d\xi = \sum_{k\in \mathbb{Z}} I_k,\,\, 
\vspace{-0.5\baselineskip}
\]
where
\vspace{-0.5\baselineskip}
\[
I_k= \int_{\R^3}\int_{\R^3} e^{i x\cdot \xi + i t a(v)  \cdot \xi} m(\xi,v)|\xi|^{a}\widehat{g}(t,  0, v)\psi_k(\xi) d v d\xi
\]
\be\label{may2eqn51}
+ \int_0^1 \int_{\R^3}\int_{\R^3} e^{i x\cdot \xi + i t a(v)  \cdot \xi} m(\xi,v)|\xi|^{a}\xi\cdot\nabla_\xi \widehat{g}(t, s\xi, v)\psi_k(\xi) d v d\xi d s.
\ee
Therefore, the following estimate holds after using the volume of support of ``$\xi$'',
\be\label{may2eqn2}
|I_{k}| \lesssim 2^{(3+a)k}\|m(\xi,v)\|_{L^\infty_v \mathcal{S}^\infty_k}\big[\|\widehat{g}(t,0,v) \|_{L^1_v } + 2^{ k}  \| (1+|x|)  {g}(t,x,v)\|_{L^1_x L^1_v}\big].
\ee
On the other hand, after first doing integration by parts in ``$v$''  ``$5+\lfloor a\rfloor$ ''times for the integrals in (\ref{may2eqn51}) and then using the volume of support of ``$\xi$'', we have 
\[
|I_k| \lesssim \sum_{|\alpha|, |\beta| \leq 5+\lfloor a\rfloor} |t|^{-5-\lfloor a\rfloor}2^{(a-\lfloor a\rfloor)k}\|\nabla_v^\beta m(\xi,v)\|_{L^\infty_v \mathcal{S}^\infty_k}\big[2^{-2k }  \| (1+|v|)^{5+|a|} \nabla_v^\alpha \widehat{g}(t,0,v)\|_{L^1_v} 
\]
\be\label{may2eqn21}
+  2^{-k }   \| (1+|v|)^{5+|a|} (1+|x|)\nabla_v^\alpha g(t,x,v)\|_{L^1_x L^1_v}\big].
\ee
In the above estimate, we used the  fact that the following estimate holds regardless $a(v)\in\{v, \hat{v}\}$,
\[
|\nabla_v a(v)|\gtrsim  ({1+|v|})^{-1},\quad |\nabla_v^2 a(v)|\lesssim ({1+|v|})^{-1}.
\]
Therefore, from the estimates (\ref{may2eqn2}) and (\ref{may2eqn21}), we have
\[
\sum_{k\in\mathbb{Z}} |I_k| \lesssim \sum_{k\in \mathbb{Z}, 2^k\leq |t|^{-1}}  2^{(3+a)k}\|m(\xi,v)\|_{L^\infty_v\mathcal{S}^\infty_k}\big[\|\widehat{g}(t,0,v) \|_{L^1_v } + 2^{ k}  \| (1+|x|)  {g}(t,x,v)\|_{L^1_x L^1_v}\big]
\]
\[
+ \sum_{k\in \mathbb{Z}, 2^k\geq |t|^{-1}}\sum_{|\alpha|, |\beta|\leq 5+\lfloor a\rfloor} \|\nabla_v^\beta m(\xi,v)\|_{L^\infty_v \mathcal{S}^\infty_k}  2^{(a-\lfloor a\rfloor)k}|t|^{-5-\lfloor a\rfloor}\big[2^{-2k }  \| (1+|v|)^{5+|a|} \nabla_v^\alpha \widehat{g}(t,0,v)\|_{L^1_v} 
\]
\[
+  2^{-k }   \| (1+|v|)^{5+|a|} (1+|x|)\nabla_v^\alpha g(t,x,v)\|_{L^1_x L^1_v}\big]  \lesssim  \sum_{|\alpha|, |\beta|\leq 5+\lfloor a\rfloor} \|\nabla_v^\beta m(\xi,v)\|_{L^\infty_v \mathcal{S}^\infty}
\]
\[
\times \big[|t|^{-3-a} \|(1+|v|)^{5+|a|} \nabla_v^\alpha \widehat{g}(t,0,v) \|_{L^1_v }   +|t|^{-4-a} \| (1+|v|)^{5+|a|} (1+|x|)\nabla_v^\alpha g(t,x,v)\|_{L^1_x L^1_v}\big].
\]
Hence finishing the proof of our desired estimate (\ref{decayrvpphi}). 
\end{proof}

Our next two    Lemmas    concerns the $L^2_{x,v}$-estimate of the nonlinearity  in (\ref{profilevp}), which is a basic step of energy estimate.  For convenience, we first define a general bilinear form.  For any fixed $a(v)\in \{v,\hat{v}\}$, fixed sign $\mu\in\{+,-\}$, any two localized distribution functions $f_1(t,x,v)$ and $f_2(t,x,v)$, any fixed $k\in\mathbb{Z}$, any symbol $m(\xi,v)\in L^\infty_v \mathcal{S}^\infty_k$, and  any differentiable coefficient $c(v)$,  we define a bilinear operator as follows, 
\be\label{noveq260}
B_k  (f_1,f_2)(t,x,v):= f_1(t,x,v) E (P_{k}[f_2(t)])(x+a(v)t), 
\ee
where
\[
E(P_k[f])(t,x):=\int_{\R^3}\int_{\R^3} e^{i x \cdot \xi} e^{-i \mu t a(u)\cdot \xi} {c(u)  m(\xi,u) \psi_k(\xi)}  \widehat{f}(t, \xi, u) d\xi d u. 
\]
 From the Poisson equation satisfied by $\phi$ in (\ref{vp}), it's straightforward  to see that the nonlinearity in (\ref{profilevp}) is of the type defined in (\ref{noveq260}) once we localize the frequency of $\nabla_x \phi$.

The study of  bilinear estimates in Lemma \ref{bilineardensitylemma} and Lemma \ref{bilinearestimatelemma2} are motivated from a particular scenario in the high order energy estimate.  Since it's possible that  $\nabla_x\phi$  of the nonlinearity  in (\ref{profilevp}), which corresponds to the input $f_2$ in (\ref{noveq260}),  has the maximal derivatives  in the energy estimate, it means that the input $f_2(t,x,v)$ is not allowed to lose derivatives in the  $L^2_{x,v}$-estimate of the bilinear operator $B_k  (f_1,f_2)(t,x,v)$. It also explains why the input $f_2(t,x,v)$ doesn't lose derivatives  in both estimates (\ref{bilineardensity}) and (\ref{bilineardensitylargek}). Moreover, these non-trivial bilinear estimates are  necessary because the rough $L^2-L^\infty$ type  bilinear estimate for the aforementioned scenario is not sufficient to close the energy estimate  due to the coefficient ``$t$'' in the nonlinearity of the equation satisfied by the profile ``$g(t,x,v)$''  in (\ref{profilevp}),

To prove the bilinear estimates  (\ref{bilineardensity}) and (\ref{bilineardensitylargek}), we rely crucially on  the  analysis of the space resonance set.  The first bilinear estimate (\ref{bilineardensity}) in Lemma \ref{bilineardensitylemma} will mainly be used in the relatively low frequency case, which is also why  we restrict the size of $k$ in the statement.

 \begin{lemma}\label{bilineardensitylemma}
Given any fixed $a(v)\in \{\hat{v}, v\}$,    $k\in \mathbb{Z},$ s.t., $ |t|^{-1}\lesssim  2^{k}\leq 1$, $t\in \R, |t|\geq 1$, and any localized differentiable function $f_3(t,v):\R_t\times \R_v^3\longrightarrow \mathbb{C}$.  
 The  following bilinear estimate  holds,
 \[
\| B_k (f_1,f_2)(t,x  ,v)\|_{L^2_x L^2_v}  \lesssim \sum_{|\alpha|\leq 5}  \big( \| m(\xi, v)\|_{L^\infty_v \mathcal{S}^\infty_k} + \| \nabla_v m(\xi, v)\|_{L^\infty_v \mathcal{S}^\infty_k} \big)  \|(1+ |v|+|x|)^{20}  \nabla_v^\alpha f_1(t,x,v)\|_{L^2_xL^2_v}
     \]
 \[
 \times  \big[ |t|^{-2}2^{k} \|\big(|c(v)|+|\nabla_v c(v)|\big) f_3(t,v) \|_{L^2_v} + |t|^{-3}  2^{ k }    \|(1+ |v|+|x|)^{20}c(v) f_2(t,x,v) \|_{  L^2_x L^2_v} 
 \]
 \be\label{bilineardensity}
 +  |t|^{-3} \| c(v)\big( \widehat{f_2}(t,0,v ) -\nabla_v\cdot f_3(t, v)\big)\|_{L^2_v}   \big], \quad 
 \ee
 \end{lemma}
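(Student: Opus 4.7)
The plan is to decompose $\widehat{f_2}(t,\xi,u)$ into three pieces matching the three terms on the right-hand side and then gain the time decay by iterated integration by parts in $v$. I would write
\[
\widehat{f_2}(t,\xi,u) = \xi\cdot\int_0^1 \nabla_\xi\widehat{f_2}(t,s\xi,u)\, ds + \big(\widehat{f_2}(t,0,u) - \nabla_u\cdot f_3(t,u)\big) + \nabla_u\cdot f_3(t,u),
\]
which induces a splitting $B_k = B_k^{(1)} + B_k^{(2)} + B_k^{(3)}$. For $B_k^{(3)}$ I would first perform a single integration by parts in $u$ to move the divergence off $f_3$; depending on whether the derivative lands on $e^{-i\mu t a(u)\cdot\xi}$ or on $c(u) m(\xi,u)$, this produces either a factor of $-i\mu t(\nabla_u a(u))^T\xi$ (of size $|t|2^k$ modulo $u$-weights) or the coefficient derivatives $\nabla_u c$ and $\nabla_u m$ responsible for the $|\nabla c|$ and $\|\nabla_v m\|$ contributions on the right-hand side.

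Next I would extract the time decay by iterating the identity
\[
e^{i\Phi} = \nabla_v\cdot\!\left[\frac{(\nabla_v a(v))^T\xi}{it\,|(\nabla_v a(v))^T\xi|^2}\,e^{i\Phi}\right] - e^{i\Phi}\,\nabla_v\cdot\!\left[\frac{(\nabla_v a(v))^T\xi}{it\,|(\nabla_v a(v))^T\xi|^2}\right]
\]
three times in each piece, where $\Phi = x\cdot\xi + t\xi\cdot(a(v) - \mu a(u))$ is the total phase. Since $\nabla_v\Phi = t(\nabla_v a(v))^T\xi$ is bounded below by $|t|2^k$ in the non-relativistic case and by $|t|2^k(1+|v|)^{-3}$ in the relativistic case (the smallest eigenvalue of $\nabla_v\hat v$ being $\sim (1+|v|)^{-3}$), each iteration gains $(|t|2^k)^{-1}$ modulo $v$-weights of order $(1+|v|)^3$. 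When the resulting divergences are paired with $f_1$ inside the $L^2_v$-norm, the product rule $f_1\,\nabla_v\cdot\widetilde F = \nabla_v\cdot(f_1\widetilde F) - \nabla_v f_1\cdot\widetilde F$ iteratively pushes derivatives onto $f_1$; between the three IBPs and the derivatives that the product rule places on the multiplier $(\nabla_v a(v))^T\xi/|(\nabla_v a(v))^T\xi|^2$, the number of $v$-derivatives landing on $f_1$ accumulates to at most $5$, which accounts for the sum $\sum_{|\alpha|\leq 5}$.

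To assemble, integrating in $\xi$ on $|\xi|\sim 2^k$ contributes a volume factor $2^{3k}$, which, combined with the gain $(|t|2^k)^{-3}$ from the three $v$-IBPs, produces the net prefactor $|t|^{-3}$ for $B_k^{(2)}$. The extra $\xi$ in $B_k^{(1)}$ upgrades this to $|t|^{-3}2^k$, while the extra $|t|2^k$ from the phase-derivative branch of the $u$-IBP in $B_k^{(3)}$ yields $|t|^{-2}2^k$ (the remaining $|\nabla c|,|\nabla m|$ branch only gains $|t|^{-3}$, which is absorbed into $|t|^{-2}2^k$ via the assumption $2^k\gtrsim |t|^{-1}$). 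Plancherel in $x$ turns $\nabla_\xi\widehat{f_2}$ into an $x$-weight on $f_2$, and a Cauchy--Schwarz step in $u$ against a cutoff weight $(1+|u|)^{-N}$, $N>3/2$, turns the remaining $L^1_u$ integrand into the $L^2_v$ norms appearing in the bound; finally, H\"older in $(x,v)$ bounds $\|f_1\cdot E\|_{L^2_xL^2_v}$ by the outer weighted norm of $\nabla_v^\alpha f_1$ times the decay-plus-density factor.

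The main obstacle is the careful tracking of $v$-weights in the relativistic case. Since $(\nabla_v\hat v)^{-1}$ can produce factors of $(1+|v|)^3$, three $v$-IBPs together with the product-rule derivatives of the multiplier can require up to roughly $(1+|v|)^{15}$ of weight on $f_1$; this must be shown to be comfortably absorbed by $(1+|v|+|x|)^{20}$. Analogously, the $u$-IBP on $B_k^{(3)}$ and the Cauchy--Schwarz step both introduce $u$-weights that must be matched against the weights available on $f_2$ and $f_3$ in the stated norms.
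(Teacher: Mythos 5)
Your decomposition of $\widehat{f_2}$ into the three pieces is exactly the one the paper uses (it produces the analogues of the paper's $I_1$, $I_2$, $I_3$), and your $u$-integration by parts for the third piece is also in the paper. But the mechanism you propose for extracting the $|t|^{-3}$ decay is wrong, and the error is structural, not cosmetic.

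You propose to gain time decay by integrating by parts in $v$ against the phase $\Phi = x\cdot\xi + t\xi\cdot(a(v)-\mu a(u))$. But $v$ is the \emph{output} variable of $B_k(f_1,f_2)(t,x,v)$ — there is no $v$-integral inside the definition of $B_k$. The algebraic identity you wrote for $e^{i\Phi}$ is correct, and the product rule $f_1\,\nabla_v\cdot\widetilde F = \nabla_v\cdot(f_1\widetilde F) - \nabla_v f_1\cdot\widetilde F$ is of course valid, but applying it does not produce any decay: the full-divergence term $\nabla_v\cdot(f_1\widetilde F)$ is not an integral you can discard, it is a pointwise function of $(x,v)$ whose $L^2_{x,v}$-norm you still have to bound, and expanding that divergence just undoes the identity. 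Dualizing against a test function $g\in L^2_{x,v}$ does not rescue this either — the $\nabla_v$ then falls on $g$, which has no extra regularity. So the claimed gain of $(|t|2^k)^{-3}$ per three ``$v$-IBPs'' is illusory, and the arithmetic $2^{3k}\cdot(|t|2^k)^{-3}=|t|^{-3}$ is built on nothing.

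What actually produces the decay in the paper is a space-resonance analysis that your argument never touches. After taking the Fourier transform in $x$ (so the integration variable is the convolution frequency $\eta$, with $\widehat{f_1}(\xi-\eta,v)$ and $\widehat{f_2}(\eta,u)$ both present), the relevant phase derivative is $\nabla_\eta\big(ta(v)\cdot\eta - t\mu a(u)\cdot\eta\big) = t(a(v)-\mu a(u))$, which satisfies the lower bound $(\ref{may26eqn110})$ \emph{only in terms of $|v-\mu u|$} and can vanish on the resonance set $v=\mu u$. The paper therefore splits the $u$-integral by the cutoffs $\phi_1=\psi_{\leq 10}(2^k|t|(v-\mu u))$ and $\phi_2=\psi_{>10}(2^k|t|(v-\mu u))$: near the resonance set it exploits the smallness of the support (the factor $(2^k|t|)^{-3}$ from the $w=v-\mu u$ volume), and away from it it integrates by parts in $\eta$ — a variable that genuinely is integrated — converting the lower bound on $|v-\mu u|$ into decay. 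Your proposal contains neither the $v-\mu u$ localization nor the $\eta$-IBP, and without them the estimate cannot close: the $|t|^{-3}$ rate is not accessible by any amount of $v$-differentiation of the phase. Note also that in the paper the sum $\sum_{|\alpha|\leq 5}\nabla_v^\alpha f_1$ arises from the $L^\infty_v\hookrightarrow L^2_v$ Sobolev embedding used when $f_1$ is placed in $L^\infty_v L^2_x$, not from integration by parts, so the provenance you assign to that sum is also mistaken.
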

 
	\begin{proof}

	Recall  the detailed formula of ``$B_k  (f_1,f_2)(t,x,v)$'' in (\ref{noveq260}). On the Fourier side, we have
	\[
	\mathcal{F}[B_k (f_1,f_2)](t,\xi, v)= \int_{\R^3} \widehat{f_1}(t, \xi-\eta, v)  \int_{\R^3}  e^{ita(v)\cdot \eta - i t \mu a(u)\cdot \eta} \psi_k(\eta)   {c(u)  m(\eta,u)  }  \widehat{f_2}(t, \eta, u)  d u d\eta.
	\]
Note that the following estimate holds for any $a(v)\in \{v, \hat{v}\}$,
\be\label{may26eqn110}
 \big|\nabla_\eta\big( a(v)\cdot \eta -  \mu a(u)\cdot \eta \big)\big|  \gtrsim |v-\mu u|(1+|v|)^{-2} (1+|u|)^{-2}. 
\ee
 Motivated from  the size of space resonance set, we choose a rough threshold of $|v-\mu u|$  to be   ``$t^{-1}2^{-k}$''. 

	More precisely,  after separating out the zero frequency of $f_2(t,x,v)$ on the Fourier side, separating out the correction term ``$\nabla_u\cdot f_3(t,u)$'', doing integration by parts in $u$ for the correction term and decomposing into two parts based on the size of ``$v-\mu u$'', the following decomposition holds, 
\be\label{may3eqn96}
\mathcal{F}[B_k (f_1,f_2)](t,\xi, v)=\sum_{i=1,2,3,j=1,2} I_i^j(t, \xi, v),
\ee
where
\[
 I_1^j(t,\xi, v):=\int_{\R^3} \widehat{f_1}(t, \xi-\eta, v)  \int_{\R^3}  e^{ita(v)\cdot \eta - i t \mu a(u)\cdot \eta} \psi_k(\eta)   \big[   m(\eta,u) c(u)\big( \widehat{f_2}(t,0, u ) -\nabla_u\cdot f_3(t,u)\big)
\]
\be\label{may3eqn41}
  - \nabla_u \big( m(\eta,u)c(u)\big)\cdot f_3(t,u) \big]\phi_j(v,u) d u    d \eta, 
\ee
\be\label{may26eqn51}
 I_2^j (t,\xi, v):=\int_{\R^3} \widehat{f_1}(t, \xi-\eta, v)  \int_{\R^3}  e^{ita(v)\cdot \eta - i t \mu a(u)\cdot \eta}   i t\mu  \nabla_u\big( a(u)\cdot \eta\big) \cdot f_3(t,u)  m(\eta,u)c(u) \psi_k(\eta) \phi_j(v,u)   d u   d \eta,
 \ee
\be\label{may3eqn42}
 I_3^j(t,\xi, v):= \int_{0}^1 \int_{\R^3} \widehat{f_1}(t, \xi-\eta, v)  \int_{\R^3}  e^{ita(v)\cdot \eta - i t \mu a(u)\cdot \eta}m(\eta,u) \eta \cdot\nabla_\eta \widehat{f_2}(t,s\eta, u ) c(u)\psi_k(\eta)\phi_j(u,v)  d u   d \eta d s, 
  \ee
where
\be\label{cutofffunction}
\phi_1 (v,u)  = \psi_{\leq 10}(2^{k}|t|  (v-\mu u)) , \quad \phi_2(v,u)= \psi_{ > 10}(2^{k}|t|  (v-\mu u)). 
\ee

\noindent $\oplus$\quad The case when $|v-\mu u|\leq |t|^{-1}2^{-k}$, i.e., the estimate of $I_i^1, i\in \{1,2,3\}.$

For this case, we mainly exploit the smallness of the space resonance set.   Note that,  in terms of kernel, the following equalities holds 
\[
\mathcal{F}^{-1}[I_1^1 ](t,x-ta(v),v)= f_1(t,x-ta(v),v) \int_{\R^3} K_k^1(t,x-t\mu a(u),v, u) \big[ c(u)\big( \widehat{f_2}(t,0, u ) -\nabla_u\cdot f_3(t,u)\big) d u 
\]
\be\label{may25eqn31}
+\nabla_u c(u)\cdot f_3(t,u) \big] +  c(u) K_k^2(t,x-t\mu a(u),v, u) \cdot f_3(t,u) d u, 
\ee
\be\label{may25eqn33}
\mathcal{F}^{-1}[I_2^1  ](t,x-ta(v),v)= f_1(t,x-ta(v),v) \int_{\R^3} c(u) K_k^3(t,x-t\mu a(u),v, u)\cdot f_3(t, u) d u, 
\ee
\be\label{may26eqn1}
\mathcal{F}^{-3}[I_3^1 ](t,x-ta(v),v)= f_1(t,x-ta(v),v) \int_0^1 \int_{\R^3} \int_{\R^3}K_k^4(t,y,v, u)\cdot P_{[k-4,k+4]}[\tilde{f_2}](t,s,x-t\mu a(u)-y  , u ) d y d u d s, 
\ee
where
\vspace{-0.5\baselineskip}
\[
K_k^1(t,x , v,u):=\int_{\R^3} e^{i x\cdot\xi } m(\xi, u) \psi_k(\xi) \psi_{\leq 10}(2^{k}|t|( v - \mu u) ) d \xi,
\]
\[
K_k^2(t,x , v,u):=\int_{\R^3} e^{i x\cdot\xi } \nabla_u m(\xi, u) \psi_k(\xi) \psi_{\leq 10}(2^{k}|t|( v - \mu u) )  d \xi,
\]
\[
K_k^3(t,x , v,u):=\int_{\R^3} e^{i x\cdot\xi }  i t\mu  \nabla_u\big( a(u)\cdot \xi\big) m(\xi, u) \psi_k(\xi) \psi_{\leq 10}(2^{k}|t|( v - \mu u) )  d \xi,
\]
\[
K_k^4(t,x , v,u):=\int_{\R^3} e^{i x\cdot\xi } m(\xi, u)\xi  \psi_k(\xi) \psi_{\leq 10}(2^{k}|t|( v - \mu u) )  d \xi, 
\]
\be\label{may26eqn3}
P_{[k-4,k+4]}[\tilde{f_2}](t,s,x,u):= \int_{\R^3} e^{ix \cdot \xi} c(u) \nabla_\xi \widehat{f_2}(t, s\xi, u)\psi_{[k-4,k+4]}(\xi) d \xi. 
\ee
After doing integration by parts in $\xi$ five times, the following pointwise estimate holds for the kernels, 
\[
 | K_k^1(t,x,v, u)|+ | K_k^2(t,x,v, u)| + (|t|2^{k})^{-1} | K_k^3(t,x,v, u)|+ 2^{-k}| K_k^4 (t,x,v, u)|
\]
\be\label{estimateofkernels1}
   \lesssim 2^{3k}(1+2^k|x|)^{-5}\psi_{\leq 10}(2^{k}|t|( v - \mu u) )\big( \| m(\xi, v)\|_{L^\infty_v \mathcal{S}^\infty_k} + \| \nabla_v m(\xi, v)\|_{L^\infty_v \mathcal{S}^\infty_k} \big)  .
\ee
Therefore, from the  duality of $L^2_{x,v}$  and the estimate of kernels in (\ref{estimateofkernels1}), we have
 \[
 \|\mathcal{F}^{-1}[I_2^1 ](t,x  	,v)\|_{L^2_x L^2_v} \lesssim \sup_{  \|g\|_{L^2_xL^2_v}\leq 1}\int_{\R^3} \int_{\R^3} \int_{\R^3}|t|2^{4k} \big( \| m(\xi, v)\|_{L^\infty_v \mathcal{S}^\infty_k} + \| \nabla_v m(\xi, v)\|_{L^\infty_v \mathcal{S}^\infty_k} \big)   \]
 \[
 \times |g(x,v)| |f_1(t,x-ta(v),v)| |c(u) f_3(u)|  \psi_{\leq 10}(2^{k}|t|( v - \mu u) ) d u d x d v 
 \]
\[
\lesssim  \sup_{  \|g\|_{L^2_x L^2_v}\leq 1}\int_{\R^3} \int_{\R^3} \int_{\R^3}|t|2^{4k} \big( \| m(\xi, v)\|_{L^\infty_v \mathcal{S}^\infty_k} + \| \nabla_v m(\xi, v)\|_{L^\infty_v \mathcal{S}^\infty_k} \big)     |g(x, \mu u + w)| 
\]
\[ 
\times    |f_1(t,x-ta(\mu u + w),\mu u + w )| |c(u) f_3(u)| \psi_{\leq 10}(2^k|t| w ) d x  d u d w 
\] 
\be\label{may26eqn101}
\lesssim |t|^{-2} 2^{  k  } \big( \| m(\xi, v)\|_{L^\infty_v \mathcal{S}^\infty_k} + \| \nabla_v m(\xi, v)\|_{L^\infty_v \mathcal{S}^\infty_k} \big)   \|f_1(t,x,v)\|_{L^\infty_vL^2_x} \| c(u)f_3(u)\|_{L^2_u}.
\ee
 With minor modifications, the following estimate holds for $I_{1}^1(t,\xi,v)$,
\[
 \|\mathcal{F}^{-1}[I_1^1 ](t,x  	,v)\|_{L^2_x L^2_v} \lesssim |t|^{-3}  \big( \| m(\xi, v)\|_{L^\infty_v \mathcal{S}^\infty_k} + \| \nabla_v m(\xi, v)\|_{L^\infty_v \mathcal{S}^\infty_k} \big) \|f_1(t,x,v)\|_{L^\infty_vL^2_x}
\]
 \be\label{may26eqn271}
  \times   \big(\| c(u)\big( \widehat{f_2}(t,0, u ) -\nabla_u\cdot f_3(t,u)\|_{L^2_u} +\| (|c(u)|+|\nabla_u c(u)|) f_3(t,u)\|_{L^2_u} \big).
 \ee
Lastly, we estimate $I_3^1(t,\xi,v)$. Recall (\ref{may26eqn1}) and (\ref{may26eqn3}). From the $L^\infty_x L^2_v-L^2_x L^\infty_v$ type bilinear estimate,  the estimate of the  kernel $K_k^4(t,y,v,u)$ in (\ref{estimateofkernels1}), the decay estimate (\ref{decayrvpphi}) in Lemma \ref{decayestimate}, and the volume of support of $\tilde{f_2}(t,s,x,v)$ on the Fourier side, we have
\[
\|\mathcal{F}^{-1}[I_3^1  ](t,x  	,v)\|_{L^2_x L^2_v} \lesssim \| K_k^4(t,y,v,u)\|_{L^1_y L^\infty_v L^2_u} \|P_{[k-4,k+4]}[\tilde{f_2}](t,s,x,u)\|_{L^\infty_s L^2_xL^2_u}\| f_1(t, x-ta(v),v)\|_{L^\infty_x L^2_v}
\]
\[
  \lesssim \sum_{|\alpha|\leq 5} |t|^{-3}2^{ -k/2 }2^{3k/2}  \big( \| m(\xi, v)\|_{L^\infty_v \mathcal{S}^\infty_k} + \| \nabla_v m(\xi, v)\|_{L^\infty_v \mathcal{S}^\infty_k} \big)  \|(1+|x|+|v|)^{10}c(v) f_2(t,x,v)\|_{L^2_{x,v}}\]
\be\label{may26eqn211}
\times   \|(1+ |v|+|x|)^{10}  \nabla_v^\alpha f_1(t,x,v)\|_{L^2_xL^2_v}. 
\ee

\noindent $\oplus$\quad The case when $|v-\mu u|\geq |t|^{-1}2^{-k}$,  i.e., the estimate of $I_i^2, i\in \{1,2,3\}.$

For this case, we mainly exploit the fact that we are away from the space resonance set by doing integration by parts in ``$\eta$''.  Recall the detailed formula of $I_{i}^2(t,\xi,v)$, $i\in\{1,2,3\}$, in  (\ref{may3eqn41}), (\ref{may26eqn51}), and   (\ref{may3eqn42}). For $I_{1}^2(t,\xi,v)$ and $I_{2}^2(t,\xi,v)$, we do integration by parts in ``$\eta$'' four times. For $I_{3}^2(t,\xi,v)$, we do integration by parts in $\eta$ twice. As a result, we have
\[
 I_1^2(t,\xi, v):=\int_{\R^3}\int_{\R^3}e^{ita(v)\cdot \eta - i t \mu a(u)\cdot \eta} t^{-4}\nabla_\eta \cdot\big(  \frac{a(v)-\mu a(u)}{|a(v)-\mu a(u)|^2} \circ\cdots\circ \nabla_\eta \cdot\big(  \frac{a(v)-\mu a(u)}{|a(v)-\mu a(u)|^2}    \widehat{f_1}(t, \xi-\eta, v)     \psi_k(\eta) 
\]
\be\label{may26eqn91}
 \times  \big(  m(\eta,u) c(u)\big( \widehat{f_2}(t,0, u ) -\nabla_u\cdot f_3(t,u)\big) - \nabla_u \big( m(\eta,u)c(u)\big)\cdot f_3(t,u) \big)\big)\circ\cdots\big)\psi_{>10}(2^{k}|t|(v-\mu u)) d u    d \eta, 
\ee
\[
 I_2^2(t,\xi, v):=\int_{\R^3}\int_{\R^3}e^{ita(v)\cdot \eta - i t \mu a(u)\cdot \eta} i t^{-3}\nabla_\eta \cdot\big(  \frac{a(v)-\mu a(u)}{|a(v)-\mu a(u)|^2} \circ\cdots\circ \nabla_\eta \cdot\big(  \frac{a(v)-\mu a(u)}{|a(v)-\mu a(u)|^2}      \widehat{f_1}(t, \xi-\eta, v)
\]
\be\label{may26eqn92}
 \times  \big(  \mu  \nabla_u\big( a(u)\cdot \eta\big) \cdot f_3(t,u)  m(\eta,u) \big)\big)\circ\cdots\big)\psi_{>10}(2^{k}|t|(v-\mu u)) d u    d \eta, 
\ee
\[
 I_3^2(t,\xi, v):= \int_{0}^1 \int_{\R^3} \int_{\R^3}  e^{ita(v)\cdot \eta - i t \mu a(u)\cdot \eta} t^{-2}  \nabla_\eta \cdot\big(  \frac{a(v)-\mu a(u)}{|a(v)-\mu a(u)|^2}  \nabla_\eta \cdot\big(  \frac{a(v)-\mu a(u)}{|a(v)-\mu a(u)|^2}  \widehat{f_1}(t, \xi-\eta, v)  \]
\be\label{may26eqn93}
 \times     m(\eta,u) \eta \cdot\nabla_\eta \widehat{f_2}(t,s\eta, u )  c(u)\psi_k(\eta)\big)\big)\psi_{> 10}(2^{k}|t|(v-\mu u))  d u   d \eta d s.
\ee

Similar to the strategies used in  the estimate of $ I_2^1(t,\xi, v)$ in (\ref{may26eqn101}), the following estimate holds after writing ``$\mathcal{F}^{-1}[I_2^2](t,x-ta(v), v)$'' in terms of kernels and using the estimate (\ref{may26eqn110}), 
\[
\|\mathcal{F}^{-1}[I_2^2](t,x , v)\|_{L^2_x L^2_v} \lesssim \int_{\R^3} |t|^{-3}  |w|^{-4} \psi_{>10}(|t|2^k w)\big( \| m(\xi, v)\|_{L^\infty_v \mathcal{S}^\infty_k} + \| \nabla_v m(\xi, v)\|_{L^\infty_v \mathcal{S}^\infty_k} \big)\]
\[
\times    \|(1+|v|+|x|)^{15} f_1(t,x,v)\|_{L^\infty_vL^2_x} \|(1+|u|)^{10} c(u)f_3(u)\|_{L^2_u} d w\lesssim |t|^{-2}2^{k} \big( \| m(\xi, v)\|_{L^\infty_v \mathcal{S}^\infty_k}\]
\be\label{may26eqn121}
 + \| \nabla_v m(\xi, v)\|_{L^\infty_v \mathcal{S}^\infty_k} \big)  \|(1+|v|+|x|)^{15} f_1(t,x,v)\|_{L^\infty_vL^2_x} \|(1+|u|)^{10} c(u)f_3(u)\|_{L^2_u}.
\ee
In the same spirit, we have 
\[
\|\mathcal{F}^{-1}[I_1^2](t,x , v)\|_{L^2_x L^2_v} \lesssim |t|^{-3}  \big( \| m(\xi, v)\|_{L^\infty_v \mathcal{S}^\infty_k} + \| \nabla_v m(\xi, v)\|_{L^\infty_v \mathcal{S}^\infty_k} \big)  \|(1+|x|+|v|)^{15} f_1(t,x,v)\|_{L^\infty_vL^2_x}
\]
\be\label{may26eqn281}
 \times \|(1+|u|)^{10} c(u)f_3(u)\|_{L^2_u}.
\ee
Lastly, we estimate  $ I_3^2(t,\xi, v)$. Recall (\ref{may26eqn93}). For this term, we use the same strategy used in the estimate of $ I_3^1(t,\xi, v)$ in (\ref{may26eqn211}). After writing $ I_3^2(t,\xi, v)$ in terms of kernel and then use the $L^\infty_x L^2_v-L^2_x L^\infty_v$ type bilinear estimate, the following estimate holds, 
\[ 
\|\mathcal{F}^{-1}[I_3^2](t,x , v)\|_{L^2_x L^2_v} \lesssim \sum_{|\alpha|\leq 5} |t|^{-3/2} |t|^{-2}  2^k (1+2^{-2k})  2^{3k/2}\big( \| m(\xi, v)\|_{L^\infty_v \mathcal{S}^\infty_k} + \| \nabla_v m(\xi, v)\|_{L^\infty_v \mathcal{S}^\infty_k} \big)  \]
\be\label{may26eqn311}
\times \|(1+|x|+|v|)^{20}c(v) f_2(t,x,v)\|_{L^2_{x,v}}  \|(1+ |v|+|x|)^{20}  \nabla_v^\alpha f_1(t,x,v)\|_{L^2_xL^2_v}. 
\ee
We remark that the factor $(1+2^{-2k})$ appears in the estimate (\ref{may26eqn311}) because it is possible that $\nabla_\eta$ hits $\widehat{f_1}(t, \xi-\eta,v)$ or $\widehat{f_2}(t, \eta, u)$ when doing integration by parts in $\eta$.

To sum up, our desired estimate (\ref{bilineardensity}) holds  from  the decomposition (\ref{may3eqn96}), the estimates (\ref{may26eqn101}), (\ref{may26eqn271}),   (\ref{may26eqn211}), (\ref{may26eqn121}), (\ref{may26eqn281}), and (\ref{may26eqn311}) and the $L^\infty_v\rightarrow L^2_v$ type Sobolev embedding.   
\end{proof}

Next, we show another bilinear estimate (\ref{bilineardensitylargek})  that   will mainly be used in the  relatively high frequency case.
\begin{lemma}\label{bilinearestimatelemma2}
For any $k\in \mathbb{Z}$, the following bilinear estimate holds ,
 \[
\| B_k (f_1,f_2)(t,x  ,v)\|_{L^2_x L^2_v}  \lesssim \sum_{|\alpha|\leq 5}\min\{ |t|^{-3}   ,2^{3k}\}   \|m(\xi,v)\|_{L^\infty_v \mathcal{S}^\infty_k}    \|(1+ |v|+|x|)^{20}c(v) f_2(t,x,v) \|_{  L^2_x L^2_v}
  \]
\be\label{bilineardensitylargek}
 \times    \|(1+ |v|+|x|)^{20} \nabla_v^\alpha f_1(t,x,v)\|_{L^2_xL^2_v} .
\ee
\end{lemma}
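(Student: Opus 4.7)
The plan is to prove~(\ref{bilineardensitylargek}) via two separate routes---one yielding the $2^{3k}$ bound and the other the $|t|^{-3}$ bound---and take the minimum. Both routes start from the factorization $B_k(f_1,f_2)(t,x,v)=f_1(t,x,v)\,E(P_k[f_2])(t,x+a(v)t)$.

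For the $2^{3k}$ route I would use $\|B_k(f_1,f_2)\|_{L^2_{x,v}}\leq \|E(P_k[f_2])(t,\cdot)\|_{L^\infty_y}\|f_1\|_{L^2_{x,v}}$, obtained from the change of variable $y=x+a(v)t$, noting that the $\alpha=0$ summand on the right of~(\ref{bilineardensitylargek}) absorbs $\|f_1\|_{L^2_{x,v}}$. The requisite $L^\infty_y$ bound on $E(P_k[f_2])$ is obtained pointwise by extracting $\|m\|_{L^\infty_v\mathcal{S}^\infty_k}$, integrating $\xi$ over the $\psi_k$-support to produce the volume factor $2^{3k}$, bounding $|\widehat{f_2}(t,\xi,u)|\leq \|f_2(t,\cdot,u)\|_{L^1_z}$, and applying Cauchy--Schwarz in $u$ and $z$ against $(1+|u|)^{-s_1}(1+|z|)^{-s_2}$ with $s_1,s_2>3/2$; the resulting weighted $L^2_{z,u}$ norm of $cf_2$ is absorbed into the $(1+|v|+|x|)^{20}$ weight.

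The $|t|^{-3}$ route must avoid any $v$-derivative on $f_2$, and I would accomplish this by a \emph{double dispersion} argument. Changing variable $v\mapsto y=x+a(v)t$ in the inner $v$-integral produces, for any $s\geq 5/2$ (the weight $(1+|v|)^{5/2}$ being needed to neutralize the relativistic Jacobian $(1+|v|^2)^{5/2}$ coming from $v\mapsto\hat v$),
\[
\|B_k(f_1,f_2)\|_{L^2_{x,v}}^2\lesssim |t|^{-3}\,\|(1+|v|)^{s}f_1\|_{L^2_xL^\infty_v}^2\,\|E(P_k[f_2])(t,\cdot)\|_{L^2_y}^2.
\]
The Sobolev embedding in the three-dimensional $v$-variable, $\|(1+|v|)^{s}f_1\|_{L^2_xL^\infty_v}\lesssim \sum_{|\alpha|\leq 2}\|(1+|v|)^{s}\nabla_v^\alpha f_1\|_{L^2_{x,v}}$, then supplies the $\nabla_v^\alpha f_1$ factor in the statement (with five derivatives built into~(\ref{bilineardensitylargek}) giving comfortable slack). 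The remaining task is the oscillatory-integral bound $\|E(P_k[f_2])(t,\cdot)\|_{L^2_y}\lesssim |t|^{-3/2}\|m\|_{L^\infty_v\mathcal{S}^\infty_k}\|(1+|v|+|x|)^{C}c(v)f_2\|_{L^2_{x,v}}$, which by Plancherel is an estimate on $\|\widehat E(t,\cdot)\|_{L^2_\xi}$. I would approach this by a $TT^\ast$-type argument: expand $|\widehat E|^2$ as a double $u$-integral, split the resulting kernel on $|a(u_1)-a(u_2)|\gtrless |t|^{-1}2^{-k}$, integrate by parts in $\xi$ on the non-resonant branch (so every derivative lands either on $m\psi_k$ with a $2^{-k}$ gain controlled by the $\mathcal{S}^\infty_k$ norm, or on $\widehat{f_2}$ producing physical-space $z$-weights via $\nabla_\xi\widehat{f_2}=\mathcal{F}_z[-izf_2]$---crucially never a $v$-derivative), and use the small $u$-measure $\sim(|t|^{-1}2^{-k})^{3}$ together with the $\psi_k$-volume $2^{3k}$ on the resonant branch; balancing the two contributions gives precisely $|t|^{-3/2}$.

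The main obstacle will be the relativistic case $a(u)=\hat u$, where~(\ref{may26eqn110}) only provides the degenerate lower bound $|a(u_1)-a(u_2)|\gtrsim |u_1-u_2|(1+|u_1|)^{-2}(1+|u_2|)^{-2}$. Since beating the non-integrable double-kernel singularity $|u_1-u_2|^{-4}$ in $\mathbb R^3$ requires four integrations by parts in $\xi$, the resulting factor $(1+|u|)^{8}$ has to be absorbed by the final weight. Combined with the $(1+|v|)^{5/2}$ from the $v$-dispersion step and the $(1+|z|)^{4}$ coming from $\nabla_\xi^{4}\widehat{f_2}$, the total weight on $f_2$ stays well within $(1+|v|+|x|)^{20}$, so the bound closes as stated and no $v$-derivative on $f_2$ is ever generated.
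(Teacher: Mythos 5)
Your two-route plan is genuinely different from the paper's. The paper never performs your change of variables $v\mapsto y=x+a(v)t$; instead it decomposes $\mathcal{F}[B_k(f_1,f_2)](\xi,v)$ according to the size of $|v-\mu u|$ (the separation between the \emph{outer} velocity of $f_1$ and the integration velocity of $f_2$), integrates by parts in $\eta$ on the non-resonant piece, and closes with a weighted $L^\infty_xL^2_v\text{--}L^2_xL^\infty_v$ bilinear estimate in which the $L^2_u$ mass of the cutoff $\psi_{\leq 10}(2^k|t|(v-\mu u))$ supplies a crucial $(|t|2^k)^{-3/2}$. Your double-dispersion route (Jacobian of $v\mapsto x+a(v)t$ for one $|t|^{-3/2}$, then $\|E(P_k[f_2])\|_{L^2_y}\lesssim|t|^{-3/2}$ for the other) cleanly decouples $f_1$ from the hard part of the estimate, and your $v$-Jacobian step is correct as written, including the relativistic weight $(1+|v|)^{5/2}$.

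The gap is in the $TT^\ast$ step for $k>0$. With the threshold $|a(u_1)-a(u_2)|\gtrless |t|^{-1}2^{-k}$, the resonant branch indeed yields $2^{3k}\cdot(|t|^{-1}2^{-k})^3=|t|^{-3}$, but on the non-resonant branch each integration by parts in $\xi$ gains only $(|t||a(u_1)-a(u_2)|)^{-1}\leq 2^{k}\cdot\textup{(weights)}$, and this $2^{k}$ is cancelled only when the derivative lands on $m(\xi,u)\psi_k(\xi)$. When $b$ of the $n\geq 4$ derivatives land on $\widehat{f_2}$, the Schur bound on the kernel gives $|t|^{-n}R^{3-n}\cdot 2^{3k}\cdot 2^{-(n-b)k}=|t|^{-3}2^{bk}$ with $R=|t|^{-1}2^{-k}$, which exceeds $|t|^{-3}$ whenever $b\geq 1$ and $k>0$ (and exceeds $\min\{|t|^{-3},2^{3k}\}$, which equals $|t|^{-3}$ in this regime). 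Re-optimizing the threshold does not repair this: balancing the two branches forces $R\sim|t|^{-1}2^{-(n-a)k/n}$, and the resonant side then carries $|t|^{-3}2^{(3-3a/n)k}$, still a loss unless every derivative lands on the symbol. What you actually need here is the standard velocity-averaging mechanism: expand $\|\widehat{E}\|_{L^2_\xi}^2$ as a double $(u_1,u_2)$-integral, pass to physical space in $\xi$ so that the Gram kernel becomes a correlation evaluated at $t(a(u_1)-a(u_2))$, and then rescale $w=u_1-u_2\mapsto w/t$ (with weights absorbing the relativistic Jacobian of $u\mapsto \hat u$). This produces $|t|^{-3}$ uniformly in $k$ with only spatial weights on $f_2$, precisely because the $2^{3k}$ volume factor and the $|w|^{-n}$ singularity never appear. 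Your architecture is sound, and for $2^k\lesssim 1$ your ``threshold $+$ IBP $+$ Schur'' computation does close, but as written the high-frequency half of the $|t|^{-3}$ route is not proved.
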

\begin{proof}
Note that the following estimate holds after using the $L^\infty_{x,v}-L^2_{x,v}$ type bilinear estimate and the volume of support of the frequency of $f_2(t,x,v)$, 
	\be\label{may26eqn901}
\| B_k(f_1,f_2)\|_{L^2_{x,v}} \lesssim  2^{3k}\|m(\xi,v)\|_{L^\infty_v \mathcal{S}^\infty_k}\|f_1(t,x,v)\|_{L^2_{x,v}}\|(1+|x|+|v|)^5 c(v) f_2(t,x,v)\|_{L^2_{x,v}}.
	\ee

	Alternatively,  similar to the decomposition in (\ref{may3eqn96}),  the following decomposition holds after localizing around the space resonance set,  
	\be\label{may26eqn801}
	\mathcal{F}[B_k (f_1,f_2)](t,\xi, v)= J_1(t,\xi, v) + J_2(t,\xi,v),
	\ee
	where
\[
 J_1  (t,\xi, v):=   \int_{\R^3} \widehat{f_1}(t, \xi-\eta, v)  \int_{\R^3}  e^{ita(v)\cdot \eta - i t \mu a(u)\cdot \eta}m(\eta,u)   \widehat{f_2}(t, \eta, u )      c(u)\psi_{k}(\eta)\psi_{\leq 10}(2^{k}|t|(v-\mu u))  d u   d \eta,
\]
\[
 J_2  (t,\xi, v):=   \int_{\R^3} \widehat{f_1}(t, \xi-\eta, v)  \int_{\R^3}  e^{ita(v)\cdot \eta - i t \mu a(u)\cdot \eta}m(\eta,u)   \widehat{f_2}(t, \eta, u )      c(u)\psi_{k}(\eta)\psi_{>  10}(2^{k}|t|(v-\mu u))  d u   d \eta.
	\]

Similar to the strategy we used in the estimate of $\mathcal{F}^{-1}[I_3^1](t,x,v)$ in (\ref{may26eqn211}), the following estimate holds after first writing $\mathcal{F}^{-1}[J_1](t,x,v)$ in terms of kernels and then using the $L^\infty_x L^2_v-L^2_x L^\infty_v$ type bilinear estimate, 
\[
\|\mathcal{F}^{-1}[J_1](t,x,v)\|_{L^2_x L^2_v} \lesssim \sum_{|\alpha|\leq 5} |t|^{-3} 2^{-3k/2}  \| m(\xi, v)\|_{L^\infty_v \mathcal{S}^\infty_k}\| (1+|x|+|v|)^{10} \nabla_v^\alpha f_1(t, x ,v)\|_{L^2_x L^2_v}
\vspace{-0.3\baselineskip}
\]
\[
\times  \| P_{[k-4,k+4]}[f_2](t,x,u)\|_{L^2_{x,u}}\lesssim  \sum_{|\alpha|\leq 5} |t|^{-3} 2^{-3k_{+}/2}  \| m(\xi, v)\|_{L^\infty_v \mathcal{S}^\infty_k}\| (1+|x|+|v|)^{10}\nabla_v^\alpha f_1(t, x ,v)\|_{L^2_x L^2_v}
\vspace{-0.3\baselineskip}
\]
\be\label{may26eqn381}
\times \| (1+|x|+|u|)^{5} f_2 (t,x,u)\|_{L^2_{x,u}}
\ee 

 Similar to the strategy we used in the estimate of $ I_3^2 (t,\xi,v)$ in (\ref{may26eqn311}), we also do integration by parts in ``$\eta$'' twice for $J_2(t,\xi, v)$. After  writing $\mathcal{F}^{-1}[J_2](t,x,v)$ in terms of kernels and then using the $L^\infty_x L^2_v-L^2_x L^\infty_v$ type bilinear estimate,  the following estimate holds, 
\[
\|\mathcal{F}^{-1}[J_1](t,x,v)\|_{L^2_x L^2_v} \lesssim \sum_{|\alpha|\leq 5}   |t|^{-3/2} |t|^{-2}    (1+2^{-2k})  \| m(\xi, v)\|_{L^\infty_v \mathcal{S}^\infty_k}\| (1+|x|+|v|)^{20} \nabla_v^\alpha f_1(t, x ,v)\|_{L^2_x L^2_v} \]
\[
\times \min\{1,2^{3k/2}\} \| (1+|x|+|u|)^{20}  f_2 (t,x,u)\|_{L^2_{x,u}}\lesssim \sum_{|\alpha|\leq 5} |t|^{-7/2} 2^{ - k_{-}/2}    \| m(\xi, v)\|_{L^\infty_v \mathcal{S}^\infty_k}
\]
\be\label{may26eqn610}
\times \| (1+|x|+|v|)^{20} \nabla_v^\alpha f_1(t, x ,v)\|_{L^2_x L^2_v}  \| (1+|x|+|u|)^{20}  f_2 (t,x,u)\|_{L^2_{x,u}}. 
\ee
To sum up,  our desired estimate (\ref{bilineardensitylargek}) holds from the decomposition (\ref{may26eqn801}) and the estimates (\ref{may26eqn901}), (\ref{may26eqn381}) and (\ref{may26eqn610}). 
\end{proof}

\subsection{Proof of Proposition \ref{highorderenergyestimate}}\label{highprop} 

  \begin{proof}
Recall (\ref{vp}) and (\ref{profilevp}). It is easy to see that the following equality  holds, 
\[
 \frac{1}{2}\big(E_{\textup{high}}^{ 1}(t_2)\big)^2- \frac{1}{2} \big(E_{\textup{high}}^{ 1}(t_1)\big)^2 = \sum_{ |\alpha|+|\beta|=  N_0} \sum_{\alpha_1+\alpha_2=\alpha, \beta_1+\beta_2=\beta, |\alpha_2|+|\beta_2|\leq N_0 } J_{\alpha_1,\alpha_2}^{\beta_1,\beta_2},
 \]
where
\[
J_{\alpha_1,\alpha_2}^{\beta_1,\beta_2}= \int_{t_1}^{t_2} \int_{\R^3}  \int_{\R^3} \nabla_v^{\beta_1}\big(\nabla_x^{\alpha_1+1}\phi(t,x+a(v)t)\big)\cdot  (\nabla_{x}^{\alpha_2} \nabla_v^{\beta_2+1}g(t,x ,v) - t C_{\beta_1, \beta_2}^{\alpha_1, \alpha_2}(v) \cdot \nabla_x^{\alpha_2+1}\nabla_v^{\beta_2} g(t,x ,v) ) )
\]
\be\label{april30eqn1}
 \times   \big( \omega_{\beta}^\alpha(x,v)\big)^2 \nabla_x^\alpha \nabla_v^\beta g(t,x,v)    d x d v,
\ee
where $C_{\beta_1, \beta_2}^{\alpha_1, \alpha_2}(v)$ are some uniquely determined coefficients that satisfy the following estimate and equality, 
\[
|C_{\beta_1, \beta_2}^{\alpha_1, \alpha_2}(v)| \lesssim 1, \quad C_{0, \beta }^{0, \alpha }(v)= \nabla_v a(v).
\] 

\vspace{0.4\baselineskip}

\noindent \textbf{Step 1:} Further reduction of $J_{\alpha_1,\alpha_2}^{\beta_1,\beta_2}$.

\vspace{0.4\baselineskip}

For the case when $\alpha_2=\alpha, \beta_2=\beta$, i.e., the case when all the derivatives hit on ``$(\nabla_v-t\nabla_v a(v)\cdot\nabla_x )g(t,x,v)$'',   we use the fact that
\[
\nabla_x^\alpha \nabla_v^\beta g(t,x,v) ( \nabla_v - t\nabla_v a(v)\cdot \nabla_x)\nabla_x^\alpha \nabla_v^\beta g(t,x,v)= \frac{1}{2} ( \nabla_v - t\nabla_v a(v)\cdot \nabla_x)\big( \nabla_x^\alpha \nabla_v^\beta g(t,x,v)\big)^2.
\]
Hence, after doing integration by parts in ``$v$'' and ``$x$''. The following equality holds, 
\[
J_{0,\alpha }^{0,\beta }= -  \int_{t_1}^{t_2} \int_{\R^3}  \int_{\R^3} \omega_{\beta}^\alpha(x,v)  \big( \nabla_x^\alpha \nabla_v^\beta g(t,x,v)  \big)^2 \nabla_x\phi(t,x+a(v)t)\cdot \big(\nabla_v \omega_{\beta}^\alpha(x,v)
\]
\be\label{may2eqn261}
 - t\nabla_v a(v)\cdot \nabla_x \omega_{\beta}^\alpha(x,v)\big) d x d v d t .  
\ee
In the above equality, we used the fact that   $(\nabla_v -t\nabla_v a(v)\cdot\nabla_x)f(t,x+a(v)t)$ vanishes for any differentiable function ``$f(t,x)$''. 

Note that  there is  a loss of ``$t$'' each time when the derivative ``$\nabla_v$''  hits $\nabla_x\phi(t,x+a(v)t)$, which looks very problematic. To get around this issue, actually we can reduce it further by utilizing the   the Poisson equation satisfied by $\phi$ in (\ref{vp}). Note that, 
\be\label{april29eqn1}
 \phi(t,x+ a({v}) t)  = -\int_{\R^3} \int_{\R^3} e^{i x\cdot \xi +i  t(a(v)-a(u))\cdot \xi } |\xi|^{-2} \widehat{g}(t,\xi, u) d u  d\xi .
\ee
Therefore, for any fixed index $\beta$, after doing integration by parts in ``$u$''     many times, the following equality holds, 
\[
\nabla_{v}^\beta \big(\phi(t,x+ a({v})t)\big)=\sum_{|\gamma|\leq |\beta|} t^{|\gamma|} C^1_{\beta;\gamma}(v)\nabla_x^\gamma \phi(t,x+a(v)t) =\sum_{|\gamma|\leq |\beta|}  C^1_{\beta;\gamma}(v)
\]
\[
\times  \int_{\R^3} \int_{\R^3} e^{i x\cdot \xi +i  t(a(v)-a(u))\cdot \xi } {- t^{|\gamma|} (i\xi)^\gamma}{|\xi|^{-2}}\widehat{g}(t,\xi, u) d u  d\xi =\sum_{|\gamma|\leq |\beta|}  C^1_{\beta;\gamma}(v) \int_{\R^3} \int_{\R^3} e^{i x\cdot \xi +i  t(a(v)-a(u))\cdot \xi } \]
\[ 
\times \nabla_u \cdot \big( \frac{-i\nabla_u a(u)\cdot \xi}{| \nabla_u a(u)\cdot \xi|^2} \circ\cdots\big( \nabla_u \cdot \big( \frac{-i \nabla_u a(u)\cdot \xi}{|\nabla_u a(u)\cdot \xi|^2}  \frac{-  (i\xi)^\gamma}{|\xi|^2}\widehat{g}(t,\xi, u)\big)\cdots\circ \big) d u  d\xi=\sum_{|\gamma|\leq |\beta|}  C_{\beta;\gamma}(v) 
\]
\be\label{april29eqn2}
 \times \int_{\R^3} \int_{\R^3} e^{i x\cdot \xi +i   t(a(v)-a(u))\cdot \xi }  {m_\gamma(\xi)}{|\xi|^{-2}} c_{\gamma}(u) \nabla_u^\gamma \widehat{g}(t,\xi, u) d u  d \xi:= \sum_{|\gamma|\leq |\beta|}  C_{\beta;\gamma}(v) T_{\gamma}( \nabla_u^\gamma g )(t,x+a({v})t),
\ee
where $C^1_{\beta;\gamma}(v)$, $C_{\beta;\gamma}(v)$ and  $c_{\gamma}(u)$, $|\gamma|\leq |\beta|$, are  some uniquely determined coefficients and  $m_{\gamma}(\xi)$ are some uniquely determined $0$-order symbols, i.e., $m_{\gamma}(\xi)\in \mathcal{S}^\infty$. The following rough estimate holds for any $\gamma$ and  $\beta$, s.t., $|\gamma|\leq |\beta|\leq   N_0$,
\be\label{may4eqn55}
|C^1_{\beta;\gamma}(v)|+|C_{\beta;\gamma}(v)|\lesssim 1,\quad |c_{\gamma}(u)|\lesssim (1+|u|)^{|\beta|}, \quad \|m_{\gamma}(\xi)\|_{\mathcal{S}^\infty}\lesssim 1.  
\ee  	
 Therefore, from the equality (\ref{april29eqn2}), we can reduce the equality (\ref{april30eqn1}) as follows, 
 \[
   J_{\alpha_1,\alpha_2}^{\beta_1,\beta_2} =\sum_{|\gamma|\leq |\beta_1|}\int_{t_1}^{t_2} \int_{\R^3}  \int_{\R^3} C_{\beta_1;\gamma}(v)  \nabla_x^{\alpha_1+1} T_{\gamma}(\nabla_u^{ \gamma } g)(t,x+a(v) t )  \cdot  (\nabla_{x}^{\alpha_2} \nabla_v^{\beta_2+1} g(t,x ,v)   
 \]
\be\label{april30eqn3}
  - t C_{\beta_1, \beta_2}^{\alpha_1, \alpha_2}(v) \nabla_x^{\alpha_2+1}\nabla_v^{\beta_2}  g(t,x ,v))  \big( \omega_{\beta}^\alpha(x,v)\big)^2 \nabla_x^\alpha \nabla_v^\beta g(t,x,v)   d x d v d t.
\ee

\vspace{0.4\baselineskip}

\noindent\textbf{Step 2:}\quad The estimate of the case when there are more derivatives on the profile $g(t,x,v)$. 

\vspace{0.4\baselineskip}

We first consider the case when $|\alpha_2|+|\beta_2|\geq N_0-15$. Recall (\ref{may2eqn261}) and (\ref{april30eqn3}).  From the linear $L^\infty_{x}$-decay estimate (\ref{decayrvpphi}) in Lemma \ref{decayestimate} and the $L^2_{x,v}-L^\infty_{x,v}$ type bilinear estimate, we have
\be\label{april30eqn50}
\sum_{  N_0-15\leq |\alpha_2|+|\beta_2|\leq N_0 } |  J_{\alpha_1,\alpha_2}^{\beta_1,\beta_2}  |\lesssim \int_{t_1}^{t_2} (1+|t|)^{-1} \big( E_{\textup{high}}^{ }(t)\big)^2\big(E_{\textup{low}}^{ }(t) +|t|^{-1} E_{\textup{high}}^{ }(t)\big) d t , 
\ee

\vspace{0.4\baselineskip}

\noindent\textbf{Step 3:}\quad The case when  $|\alpha_2|+|\beta_2|<  N_0-15$. 

\vspace{0.4\baselineskip}
   For this case, we do dyadic decomposition for ``$T_{\gamma}(\nabla_u^{\gamma} g)(t,x+v t )$'' first. As a result, we have
\[
J_{\alpha_1,\alpha_2}^{\beta_1,\beta_2}= \sum_{k\in \mathbb{Z}}H_k, \quad H_k=\sum_{|\gamma|\leq |\beta_1|} \int_{t_1}^{t_2} \int_{\R^3}  \int_{\R^3} C_{\beta_1;\gamma}(v)  \nabla_x^{\alpha_1+1} T_{\gamma}(P_{k} [\nabla_u^{\gamma} g])(t,x+a(v) t )  \cdot  (\nabla_{x}^{\alpha_2} \nabla_v^{\beta_2+1} g(t,x ,v)
\]
\[
  - t  C_{\beta_1, \beta_2}^{\alpha_1, \alpha_2}(v)\nabla_x^{\alpha_2+1}\nabla_v^{\beta_2} g(t,x ,v))  \big( \omega_{\beta}^\alpha(x,v)\big)^2 \nabla_x^\alpha \nabla_v^\beta g(t,x,v)  d x d v.
\]
Recall (\ref{april29eqn2}). After first separating out the zero frequency of ``$\nabla_u^\beta \widehat{g}(t, \xi, u)$'' and then separating out the correction term $\nabla_u\cdot g_\beta(t,v)$ from the zero frequency,   the following decomposition holds,
\[
T_{\gamma}( \nabla_u^\gamma g )(t,x+vt)=  \int_{\R^3} \int_{\R^3} e^{i x\cdot \xi +i  t(a(v)-a(u))\cdot \xi }  {m_\gamma(\xi)} c_{\gamma}(u) {|\xi|^{-2}}   \big(\nabla_u^\gamma \widehat{g}(t,0, u) - \nabla_u\cdot g_{\gamma}(t,u)\big) c_{\beta}(u)d u  d \xi
\]
\[
-  \int_{\R^3} \int_{\R^3}   {m_\gamma(\xi)}{|\xi|^{-2}}  \nabla_u\big(c_{\gamma}(u) e^{i x\cdot \xi +i  t(a(v)-a(u))\cdot \xi }\big)  \cdot g_{\gamma}(t,u)     d u  d \xi
\]
\be\label{may2eqn61}
+ \int_0^1 \int_{\R^3} \int_{\R^3} e^{i x\cdot \xi +i  t(a(v)-a(u))\cdot \xi }   {m_\gamma(\xi)}{|\xi|^{-2}} c_{\gamma}(u) \xi\cdot \nabla_\xi \nabla_u^\beta \widehat{g}(t,s\xi , u) d u  d \xi d s.
\ee

From the decompositions (\ref{may2eqn61}), it is easy to see that the following estimate holds after using the volume of support of ``$\xi$'' if $2^{k}\leq |t|^{-1}$, 
\be\label{may2eqn101}
  \| \nabla_x^{\alpha_1+1} T_{\gamma}(P_{k} [\nabla_u^{ \gamma } g])(t,x  )\|_{L^2_x} \lesssim \sum_{|\alpha|\leq N_0} 2^{k/2}\big[ E_{\textup{low}} (t)   +\big(1+|t|2^{ k }\big) \|\widetilde{\omega}_\alpha(v)g_{\alpha}(t,v) \|_{L^2_v} +  2^{ k } E_{\textup{high}} (t)]. 
\ee
 From   the $L^2_{x}L^\infty_v-L^\infty_x L^2_v$ type bilinear estimate, the   above estimate (\ref{may2eqn101}), and the $L^\infty_xL^2_v$ decay estimate (\ref{decayrvpphi}) in Lemma \ref{decayestimate} and the Sobolev embedding, the following estimate holds, 
\be\label{april30eqn9}
  |H_k| \lesssim  \int_{t_1}^{t_2}     |t|^{-1/2} 2^{k/2}\big[\big( E_{\textup{low}} (t)+ 2^{ k } E_{\textup{high}} (t)\big)E_{\textup{high}}  (t)   +\big(1+|t|2^{ k }\big) \|\widetilde{\omega}_\alpha(v)g_{\alpha}(t,v) \|_{L^2_v}E_{\textup{high}}^2 (t)  ]  E_{\textup{high}}  (t)  d t.
\ee
If $2^{k}\geq |t|^{-1}$,  from the bilinear estimates (\ref{bilineardensity}) in Lemma \ref{bilineardensitylemma} and  the bilinear estimate (\ref{bilineardensitylargek}) in Lemma \ref{bilinearestimatelemma2}, and the estimate (\ref{may4eqn55}), the following estimate holds, 
\[
|H_k| \lesssim \int_{t_1}^{t_2}   |t|^{-1}   E_{\textup{high}} (t)  \big[\big(|t|^{-1}2^{-k }   E_{\textup{low}}^{ }(t)+ |t|^{-1/2} 2^{-|k|/2}E_{\textup{high}}^{ }(t)\big) E_{\textup{high}} (t)
\]
\be\label{april30eqn10}
 +\mathbf{1}_{(-\infty,0]}(k) \| \widetilde{\omega_\alpha}(v) g_{\alpha}(t,v)\|_{L^2_v} E_{\textup{high}}^2 (t)   \big] d t.
\ee
To sum up, from the estimates  (\ref{april30eqn9}) and (\ref{april30eqn10}), we have
\[
\sum_{   |\alpha_2|+|\beta_2|< N_0-15} | J_{\alpha_1,\alpha_2}^{\beta_1,\beta_2}  |\lesssim \sum_{|\alpha|\leq N_0} \int_{t_1}^{t_2}  \sum_{k\in \mathbb{Z}, 2^{k}\leq   |t|^{-1}}      |t|^{-1/2} 2^{k/2}\big[\big( E_{\textup{low}} (t)+ 2^{ k } E_{\textup{high}} (t)\big)E_{\textup{high}}  (t)   +\big(1+|t|2^{ k }\big)\]
\[
\times  \|\widetilde{\omega}_\alpha(v)g_{\alpha}(t,v) \|_{L^2_v}E_{\textup{high}}^2 (t)  ]  E_{\textup{high}}  (t)  d t +\int_{t_1}^{t_2}  \sum_{k\in \mathbb{Z}, 2^{k}\geq   |t|^{-1}}  |t|^{-1}   E_{\textup{high}} (t)  \big[  \big(|t|^{-1}2^{-k }   E_{\textup{low}}^{ }(t)+ |t|^{-1/2} 2^{-|k|/2} \]
\[
\times E_{\textup{high}}^{ }(t)\big) E_{\textup{high}} (t) + \mathbf{1}_{(-\infty,0]}(k) \| \widetilde{\omega_\alpha}(v) g_{\alpha}(t,v)\|_{L^2_v} E_{\textup{high}}^2 (t)   \big] d t  \lesssim \sum_{|\alpha|=N_0} \int_{t_1}^{t_2}    |t|^{-1}
\]
\vspace{-0.5\baselineskip}
\be\label{april30eqn51}
\times  E_{\textup{high}}^{ } (t) \big[ E_{\textup{low}}^{ } (t)E_{\textup{high}}^{ } (t) + |t|^{-1/2} \big( E_{\textup{high}}^{ } (t)\big)^2 +\log(1+|t|) \| \widetilde{\omega_\alpha}(v) g_{\alpha}(t,v)\|_{L^2_v} E_{\textup{high}}^2 (t)  \big] d t.
\ee
 Our desired estimate (\ref{highordervp}) follows from combining the estimates (\ref{april30eqn50}) and (\ref{april30eqn51}),.

Recall (\ref{may5eqn1}). Note that $g_{\alpha}(t,v)=0$ when $|\alpha|< N_0$. Our  desired estimate (\ref{highordervp2}) holds from rerun the the above argument  with minor modifications. We omit details here.

Lastly, recall (\ref{may5eqn1}) and (\ref{vp}). It is easy to see that the  desired estimate (\ref{may7eqn31}) holds straightforwardly  from the decay estimate (\ref{decayrvpphi}) in Lemma \ref{decayestimate}. 
 
 \end{proof}

\subsection{Proof of Proposition \ref{loworderenergy}}\label{lowprop} 

\begin{proof}
Recall the definition of the low order energy in (\ref{loworderenergydef}) and the definition of the  correction term $g_\alpha(t,v)$ in (\ref{may5eqn1}). From the equation satisfied by the profile $g(t,x,v)$ in (\ref{profilevp}), we have
\[
\sum_{|\alpha|\leq N_0}\p_t\big(\nabla_v^\alpha \widehat{g}(t,0,v)- \nabla_v \cdot g_\alpha(t,v)\big)
\]
\[
= \int_{\R^3}  \sum_{|\alpha|\leq N_0}  \nabla_v^\alpha\big(  \nabla_x \phi(t,x+a(v)t)\cdot\big(-t\nabla_v a (v)\cdot \nabla_x g(t,x,v) +\nabla_v g(t,x,v) \big)\big) d x 
 \]
\be\label{may5eqn6}
 - \int_{\R^3} \sum_{|\alpha|=N_0} \nabla_v\cdot\big( \nabla_x\phi( t,x+a(v)t) \nabla_v^\alpha g(t,x,v)   \big)  d x = \sum_{|\alpha|\leq N_0,  |\beta|+|\gamma|\leq |\alpha|, |\gamma|\leq N_0-1} H_{\beta, \gamma}^\alpha     (t,v),\ee
where
\be\label{may5eqn2}
H_{\beta, \gamma}^\alpha (t,v)= \int_{\R^3} \nabla_x \nabla_{v}^\beta\big(\phi(t,x+a(v)t)\big)\cdot\big(\nabla_v^{\gamma+1} g(t,x,v)-t C^\alpha_{\beta;\gamma}(v)\cdot \nabla_x\nabla_v^{\gamma} g(t,x,v) \big)d x, 
\ee
 where  $C_{\beta;\gamma}^\alpha (v)$, $|\beta|+|\gamma|\leq N_0$, $|\gamma|\leq N_0-1$, are some uniquely determined coefficients that satisfy the following rough estimate,
 \be\label{may5eqn22}
 |C_{\beta;\gamma}^\alpha(v)|\lesssim 1.
 \ee Recall the equality (\ref{april29eqn2}), after replacing $\nabla_v^\beta\big(\phi(t,x+a(v)t)\big) $ by  $\sum_{|\iota|\leq |\beta|}  C_{\beta;\iota}(v) T_{\iota}( \nabla_u^\iota g )(t,x+a({v})t) $ and  doing dyadic decomposition for $T_{\iota}(\nabla_u^\iota g)$, we have, 
 \[
 H_{\beta, \gamma}^\alpha (t,v)=\sum_{k\in \mathbb{Z}}H_{\beta, \gamma}^{
 \alpha; k}(t,v),\quad H_{\beta, \gamma}^{\alpha; k}(t,v)= \sum_{|\iota|\leq |\beta|} \int_{\R^3} C_{\beta;\iota}(v) \nabla_x T_{\iota}(P_k[\nabla_u^\iota g]) (t,x+a(v)t) \cdot\big(\nabla_v^{\gamma+1} g(t,x,v)\]
 \be\label{may5eqn11}
 -t C^\alpha_{\beta;\gamma}(v)\cdot \nabla_x\nabla_v^{\gamma} g(t,x,v) \big)d x. 
 \ee

 Based on the possible size of $k$, we separate into two cases as follow. 

\noindent $\bullet$ \quad The case when $k\geq 0.$ \qquad Recall (\ref{april29eqn2}). For the case when $|\gamma|\geq N_0-15$, we use the $L^\infty_{x }-L^1_xL^2_v$ type bilinear estimate by  putting $\nabla_x T_{\iota}(P_k[\nabla_u^\iota g])(t,x )$ in $L^\infty_{x }$.  For the case when $|\gamma|\leq N_0-15$, we use the  bilinear estimate (\ref{bilineardensitylargek}) in Lemma \ref{bilineardensitylemma}. As a result, the following estimate holds from  the estimate of coefficients in (\ref{may4eqn55}) and (\ref{may5eqn22}) and  the decay estimate (\ref{decayrvpphi}) in Lemma \ref{decayestimate},
\be\label{may4eqn1}
\sum_{|\alpha|\leq N_0,  |\beta|+|\gamma|\leq |\alpha|, |\gamma|\leq N_0-1}  \| \widetilde{\omega}_{\alpha}(v) H_{\beta, \gamma}^{\alpha; k}(t,v)\|_{L^2_v}\lesssim 2^{-k/2}  (1+|t|)^{-2} \big( E_{\textup{high}}^{ }(t)\big)^2  .
\ee
\noindent $\bullet$ \quad The case when $k\leq 0.$ \qquad For this case, we first do integration by parts in ``$x$'' to move around the ``$\nabla_x$'' derivative in front of ``$\nabla_x\nabla_v^{\gamma}   g(t,x,v)$''.  As a result, we have
\[
H_{\beta, \gamma}^{\alpha;k}(t,v)= \sum_{|\iota|\leq |\beta|} C_{\beta;\iota}(v)\big[  \int_{\R^3} \nabla_x T_{\iota}(P_k[\nabla_u^\iota g]) (t,x+a(v)t) \cdot \nabla_v^{\gamma+1} g(t,x,v)
 \]
 \be\label{may4eqn20}
 + t  \big(C^{\alpha}_{\beta;\gamma}(v)\cdot \nabla_x\big)\cdot\nabla_x T_{\iota}(P_k[\nabla_u^\iota g])(t,x+ a(v)t )    \nabla_v^{\gamma}    g(t,x,v) d x\big]. 
  \ee
 We use the same strategy used in the case    $k\geq 0$ to handle the above integral. More precisely,  we use the  $L^\infty_{x }-L^1_xL^2_v$  type bilinear estimate if $|\gamma|\geq N_0-15$  and use  the  bilinear estimate (\ref{bilineardensitylargek}) in Lemma \ref{bilineardensitylemma} if $|\gamma|\leq N_0-15$. As a result, the following estimate holds from the estimate of coefficients in (\ref{may4eqn55}) and (\ref{may5eqn22}), the decay estimate (\ref{decayrvpphi}) in Lemma \ref{decayestimate}, and  the  bilinear estimate (\ref{bilineardensitylargek}) in Lemma \ref{bilineardensitylemma},
\[
\sum_{|\alpha|\leq N_0,  |\beta|+|\gamma|\leq |\alpha|, |\gamma|\leq N_0-1} \| \widetilde{\omega}_{\alpha}(v) H_{\beta, \gamma}^{\alpha; k}(t,v)\|_{L^2_v}\lesssim   \big[ \min\{ (1+|t|)^{-2} + (1+|t|)^{-3}2^{-k}, (1+|t|)2^{3k} +2^{2k}\}
\]
 \be\label{may4eqn41}
 +\min\{ (1+|t|)^{-2} , (1+|t|)^{-1}2^{k} \} \big]\big( E_{\textup{high}}^{ }(t)\big)^2  .
 \ee

To sum up,   our desired estimate (\ref{may4eqn45}) holds from the decomposition (\ref{may5eqn6}) and  the estimates (\ref{may4eqn1}) and (\ref{may4eqn41}). We remark that the loss of ``$(1+t)^{\delta}$'' is caused by the summation with respect to  $k\in \mathbb{Z}$ when    $(1+|t|)^{-1}\leq 2^k\leq 1$. 
\end{proof}

\end{document}